\documentclass{IBS}
\usepackage{pdfsync}
\usepackage[active]{srcltx}
\newcommand{\qed}{\par\vskip-\lastskip
    \vskip-\baselineskip\prbox\par
    \addvspace{12pt plus3pt minus3pt}}
\bibliographystyle{amsplain}
\begin{document}

\markboth{SERGEY BAKULIN} {ON IDENTITIES OF INDICATOR BURNSIDE SEMIGROUPS}

\title{ON IDENTITIES OF INDICATOR BURNSIDE SEMIGROUPS}

\author{SERGEY BAKULIN}

\address{Department of Mathematics and Mechanics\\ St. Petersburg State University\\
Saint-Petersburg, Russia\,\\
\email{vansergen@gmail.com} }

\maketitle

\begin{abstract}

A semigroup variety is said to be a \emph{Rees-–Sushkevich variety} if it is contained in a periodic variety generated by 0-simple semigroups. S. I. Kublanovsky has proven that a variety $V$ is a Rees---Sushkevich variety if and only it does not contain any of special finite semigroups. These semigroups are called \emph{indicator Burnside semigroups}. It is shown that indicator Burnside semigroups have polynomially decidable equational theory. Also it is shown that each indicator Burnside semigroups generate a finitely based variety.

\end{abstract}

\keywords{Semigroups; 0-simple; varieties; Rees---Sushkevich; identities; finitely based;}

\section{Introduction}

In the foundational work by Sushkevich \cite{Sushkevich1}, finite simple semigroups have been characterized in terms of special matrices over finite groups. Then Rees \cite{Rees1} generalized Sushkevich's results to periodic simple semigroups. Namely, he proved that periodic completely 0-simple semigroups can be described by the same construction as used by Sushkevich. Recall that a semigroup is called \emph{0-simple} if it does not have ideals except itself and possibly 0. A 0-simple semigroup is called \emph{completely 0-simple} if it has a minimal non-zero idempotent. Following Kublanovsky \cite{Kublanovsky1} any subvariety of a periodic variety generated by 0-simple semigroups a \emph{Rees-–Sushkevich variety}. Rees---Sushkevich varieties have been studied in a number of articles (see, for instance, \cite{Kublanovsky1,Kublanovsky2,Lee3,Lee6,Mashevitzky2,Reilly1} or Section 9 in the recent survey \cite{Shevrin2}). In particular, as established by Hall et al. \cite{Hall1} the variety $\mathbf{RS_{n}}$ generated by all completely 0-simple semigroups over groups of exponent dividing $n$ is finitely based

$$x^2=x^{n+2},\ xyx=(xy)^{n+1}x,\ (xhz)^{n}xyz=xyz(xhz)^{n}$$.

It is natural to consider the following question: are finitely based Rees---Sushkevich varieties recognizable within the class of all semigroup varieties? In other words: for a given set $\Pi$ of semigroup identities, is it possible to recognize whether or not the variety defined by $\Pi$ is a Rees---Sushkevich variety? Clearly, this question is a special case of the general problem of deducing identities, which is undecidable in the class of all semigroups as was shown by Murskii \cite{Murskii1}. It turns out, the problem of recognizing Rees---Sushkevich varieties is decidable (see Proposition \ref{IBSINTRO}).

This result shows that there exists an algorithm that determines whether the variety defined by a given finite system of identities is a Rees---Sushkevich variety. However, such algorithm has exponential complexity. Thus it is important to understand whether there exists a polynomial algorithm that recognizes Rees---Sushkevich varieties. The following main result of this paper gives an affirmative answer to this question.

\begin{theorem}
  There exists a polynomial algorithm that determines whether the variety defined by a given finite system of identities is a Rees---Sushkevich variety. Specifically, each indicator Burnside semigroup have a polynomially decidable equational theory.
\end{theorem}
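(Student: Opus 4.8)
The plan is to first invoke Kublanovsky's theorem stated above: the variety defined by a finite set of identities $\Pi$ is a Rees---Sushkevich variety if and only if it contains none of the finitely many indicator Burnside semigroups $S_1,\dots,S_m$. For a fixed finite semigroup $S$ one has $S\in\mathbf{Var}(\Pi)$ if and only if $S$ satisfies every identity in $\Pi$. Hence the recognition procedure is: for each $i\le m$ and each identity $u\approx v$ in $\Pi$, decide whether $S_i\models u\approx v$; the variety defined by $\Pi$ fails to be a Rees---Sushkevich variety precisely when some $S_i$ satisfies all of $\Pi$. Since $m$ is an absolute constant, this runs in time polynomial in the total length of $\Pi$ as soon as each indicator Burnside semigroup has a polynomially decidable equational theory. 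So everything reduces to the second assertion of the theorem: for a fixed indicator Burnside semigroup $S$, decide $S\models u\approx v$ in time polynomial in $|u|+|v|$.

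\textbf{Step 2: a polynomial normal form.} The naive test --- evaluating $u$ and $v$ under all $|S|^{k}$ substitutions, where $k=|\mathrm{var}(u)\cup\mathrm{var}(v)|$ --- is exponential in $k$ and is exactly what makes Proposition~\ref{IBSINTRO} exponential. I would instead exploit the rigid structure of the indicator Burnside semigroups: each is a small semigroup built from a $0$-simple (or nilpotent) kernel together with a bounded ``tail'', whose maximal subgroups are of a tame type. For such a semigroup the value $\varphi(w)$ of a substitution $\varphi$ on a long word $w$ is governed by only a bounded amount of information, so the goal is a \emph{polynomial-time computable normal form} $\mathrm{nf}_S(w)$ --- a flag recording whether $\varphi$ can send $w$ to the absorbing element, a bounded prefix datum, a bounded suffix datum, and a group coordinate extracted from the exponent (Parikh) vector of $w$ --- with $S\models u\approx v$ if and only if $\mathrm{nf}_S(u)=\mathrm{nf}_S(v)$. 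Producing this normal form, case by case from the explicit description of the indicator Burnside semigroups, is the technical heart of the proof.

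\textbf{Step 3: carrying it out for each $S$.} Concretely I would (i) read off from the structure of $S$ the finitely many ``shapes'' a substitution can realise on a word and rewrite ``$\varphi(u)=\varphi(v)$ for all $\varphi$'' as ``either both sides are forced into the absorbing element, or neither is and all coordinates agree''; (ii) express the absorbing behaviour by an explicit syntactic condition on the words --- presence or absence of a short factor, a repeated-variable pattern, or a content inclusion --- checkable by a single scan of $u$ and $v$; (iii) for the substitutions keeping both sides non-absorbing, reduce equality of the prefix and suffix coordinates to equality of bounded prefixes and suffixes of $u$ and $v$ after a normalisation, and reduce equality of the group coordinate to the identity of a system of linear congruences in the exponent vectors of $u$ and $v$ modulo the exponents of the maximal subgroups of $S$, decidable in polynomial time by linear algebra over finite cyclic groups; (iv) conclude that $S\models u\approx v$ is a conjunction of these finitely many polynomially checkable conditions. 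Combined with Step~1 this gives the polynomial recognition algorithm.

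\textbf{Where the difficulty lies.} The delicate point is the group coordinate in (iii). Some small finite semigroups and groups are known to have co-NP-complete term-equivalence problems, so a polynomial algorithm simply cannot exist for an arbitrary finite semigroup; the argument must therefore genuinely use that the subgroups occurring inside the indicator Burnside semigroups are tame --- abelian, and I expect cyclic of small order --- so that checking a group identity collapses to comparing exponent vectors modulo fixed integers rather than to a general group word problem. A secondary, bookkeeping difficulty is making the interplay between the absorbing cases and the coordinate cases precise and uniform: a substitution that is absorbing on one side but not the other already refutes the identity, so the syntactic conditions of (ii) must isolate exactly the substitutions that matter, and this must be verified separately for each of the structurally distinct indicator Burnside semigroups.
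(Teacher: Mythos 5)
Your proposal follows the same overall route as the paper: Kublanovsky's criterion (Lemma~\ref{IBSINTRO}) reduces the question to deciding, for each indicator Burnside semigroup $S$ and each identity in $\Pi$, whether $S\vDash u\approx v$, and one then solves the word problem of each such $S$ by an explicit, polynomially checkable syntactic criterion. That is exactly the architecture of the paper. However, beyond the fact that your Steps~2--3 are a plan rather than a proof (the case-by-case characterizations for $A$, $B$, $C_{\lambda}$, $K_{n}$, $F_{\lambda}$, $W_{\lambda}$ are the entire content of the paper's Section~3, and you do not carry out any of them), there are two concrete gaps. First, the list of indicator Burnside semigroups is \emph{not} finite: it contains the infinite family $K_{n}$, $n=1,2,\dots$, so your claim that ``$m$ is an absolute constant'' is false as stated. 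An extra argument is needed to bound which $n$ must be tested; the paper restricts to those $n$ dividing the period of $V(\Pi)$, and one must also check that this set of exponents is polynomially bounded and computable from $\Pi$.

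Second, your proposed normal form --- a zero flag, a bounded prefix datum, a bounded suffix datum, and an exponent vector modulo fixed integers --- cannot work for $W_{\lambda}$ and $W_{\rho}$. The Brandt semigroup $B_{2}$ is a homomorphic image of $W_{\lambda}$, and by Mashevitsky's and Reilly's solution of its word problem (Lemma~\ref{B2ID}) the identities of $B_{2}$ are governed by the connected components of the graphs $Gr(u)$ and $Gr(v)$, an invariant whose size grows with the number of letters and which is not determined by any bounded amount of prefix/suffix information together with the Parikh vector. The theorem survives because connected components are computable in linear time, but the specific template of your Step~2 fails there, and this is precisely the hardest case. (Your worry about co-NP-hard group coordinates, by contrast, is moot here: the only subgroups occurring in the indicator Burnside semigroups are trivial or cyclic, as in the $C_{2,n}$ inside $K_{n}$, so the group coordinate really is just an exponent count modulo $n$.)
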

Note that the identity-checking problem for a finite semigroup is decidable and is co-NP. \cite{Klima1,Seif1}.

Another major issue in the study of identities is a finite basis property. S. Oates and M. B. Powell \cite{Oates1} proved that each finite group generate a finitely based variety. As opened the first P. Perkins \cite{Perkins2} found a finite semigroup which generates infinitely based variety. His example was a matrix semigroup

$$\left\{\left(%
\begin{array}{cc}
  1 & 0 \\
  0 & 0 \\
\end{array}%
\right),\left(%
\begin{array}{cc}
  0 & 1 \\
  0 & 0 \\
\end{array}%
\right),\left(%
\begin{array}{cc}
  0 & 0 \\
  1 & 0 \\
\end{array}%
\right),\left(%
\begin{array}{cc}
  0 & 0 \\
  0 & 1 \\
\end{array}%
\right),\left(%
\begin{array}{cc}
  0 & 0 \\
  0 & 0 \\
\end{array}%
\right),\left(%
\begin{array}{cc}
  1 & 0 \\
  0 & 1 \\
\end{array}%
\right)\right\},$$
over an arbitrary field. This semigroup is called the 6-element Brandt monoid and denoted by $B_{2}^{1}$. Finite basis property for finite semigroup varieties is being actively studied \cite{Shevrin1, Volkov2}. General problem was posed by A. Tarski \cite{Tarski1}. R. McKenzie proved that Tarski's problem is undecidable in the class of all finite groupoids. The same question in the class of all finite semigroups is still open. So it is interesting question: Do each indicator Burnside semigroup generate a finitely based variety? The second main result of this paper gives an affirmative answer to this question.

\begin{theorem}
  Each indicator Burnside semigroup generate a finitely based variety.
\end{theorem}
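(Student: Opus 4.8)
The plan is to reduce to the explicit list of indicator Burnside semigroups recalled above and to produce, for each of them, a concrete finite identity basis together with a normal--form argument certifying it. A preliminary step is to thin out the list: several indicator Burnside semigroups either divide one another or generate the same variety, and for some of them (small aperiodic $0$--simple semigroups such as $A_{2}$, $B_{0}$ and their relatives, as well as Brandt semigroups over small cyclic groups) a finite basis is already available in the literature, so that only a bounded number of genuinely new cases remain.

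The structural input I would use is that every indicator Burnside semigroup $S$ is a finite periodic semigroup with a completely $0$--simple (Sushkevich--type matrix) ideal $K$ over a finite group of bounded exponent, such that the Rees quotient $S/K$ is nilpotent of bounded class. Consequently $S$ satisfies the Burnside identity $x^{p}=x^{p+q}$ for $p,q$ depending only on $S$, the commutation/permutation identities that pin down the group divisors of $S$, and suitably length--bounded instances of the $\mathbf{RS}$--type laws $xyx=(xy)^{p+1}x$ and $(xhz)^{p}xyz=xyz(xhz)^{p}$. The candidate basis $\Sigma_{S}$ consists of these identities, the identities describing the completely $0$--simple core (modelled on the known basis of $\mathbf{RS}_{n}$), a bounded family of identities expressing the $c$--nilpotency of $S/K$ (of the form $x_{1}\cdots x_{c}\,uz=x_{1}\cdots x_{c}\,u'z$ once both sides are prefixed by a fixed absorbing word), and finitely many exceptional identities peculiar to $S$ that control how once--occurring (``linear'') letters may be inserted into a word. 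That $S\models\Sigma_{S}$ is a routine finite verification.

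For completeness one must show $\Sigma_{S}$ implies every identity true in $S$. The heart of the argument is a reduction lemma: using $\Sigma_{S}$ one rewrites an arbitrary word $w$ to a canonical representative recording (a) the content of $w$; (b) the initial and terminal factors of $w$ up to a length bounded in terms of $S$; (c) the exponents of the non--linear letters, taken modulo $q$ and capped at $p$; and (d) the relative order and insertion pattern of the linear letters. One then argues that $S\models u=v$ forces the canonical representatives of $u$ and $v$ to coincide, whence $u=v$ follows from $\Sigma_{S}$. I expect item (d) to be the main obstacle, and it is exactly the point where the special nature of the indicator Burnside semigroups matters: the uncontrolled interaction of linear letters is what makes Perkins' monoid $B_{2}^{1}$ non--finitely based, so one has to check that in each $S$ on the list the admissible configurations of linear letters are cut out by finitely many identities --- equivalently, that $S$ admits no $B_{2}^{1}$--like sequence of ever longer critical identities. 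This should come from the completely $0$--simple core: any sufficiently long product containing a repeated letter is forced to be ``absorbing'' in $S$, which collapses precisely the long critical words that would otherwise be needed. The remaining tasks --- establishing that the laws in $\Sigma_{S}$ indeed effect the rewriting steps (a)--(c), and carrying out the finite bookkeeping for (d) case by case --- are routine.
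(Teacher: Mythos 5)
Your high-level plan---run through Kublanovsky's explicit list, exhibit a concrete finite identity basis for each semigroup, and certify it by solving the word problem in the corresponding variety---is exactly the shape of the paper's argument, and your observation that the list thins out (dual pairs $C_{\lambda}/C_{\rho}$, $F_{\lambda}/F_{\rho}$, $W_{\lambda}/W_{\rho}$; the coincidence $\mathbf{D}=\mathbf{K_{1}}$; small cases handled by Perkins' commutative theorem and Trahtman's $\leq 5$-element theorem) is also used there. However, your structural premise is wrong about what the indicator Burnside semigroups are. They are the \emph{forbidden divisors} for Rees---Sushkevich varieties, not members of $\mathbf{RS_{n}}$: the list consists of $A$, $B$, $C_{\lambda}$, $C_{\rho}$, $N_{3}$, $D$, $K_{n}$, $F_{\lambda}$, $F_{\rho}$, $W_{\lambda}$, $W_{\rho}$, $L_{2}^{1}$, $R_{2}^{1}$, and does not contain $A_{2}$, $B_{0}$ or Brandt semigroups over cyclic groups. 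Consequently there is no reason for these semigroups to satisfy the $\mathbf{RS}$-type laws $xyx\approx(xy)^{p+1}x$ and $(xhz)^{p}xyz\approx xyz(xhz)^{p}$ that you propose to put into $\Sigma_{S}$, nor is it true that a ``completely $0$-simple core'' makes every long product with a repeated letter absorbing: in $K_{n}$ the letter $y$ generates a copy of $C_{2,n}$ and the values $xy^{k}$, $y^{k}x$, $xyx$ survive arbitrarily long words, and in $F_{\lambda}$ nothing is absorbed at all (every product of length two is a left zero, with basis $ab\approx abc$). A candidate basis built on that premise would not even be satisfied by several of the semigroups it is supposed to axiomatize.

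The second, and more serious, gap is that the completeness step---your items (a)--(d) and especially (d)---is precisely where all the content lies, and you defer it as ``routine bookkeeping.'' There is no uniform normal form covering all thirteen semigroups; the paper must solve the word problem separately and by quite different invariants in each case (content and length for $B$; the first two letters and their occurrence counts for $C_{\lambda}$; head, tail, occurrence counts and a gcd-invariant $\xi_{n}$ for $K_{n}$; only the first two letters for $F_{\lambda}$). For $W_{\lambda}$ the argument is genuinely nontrivial and cannot be produced by generic rewriting: it requires importing Reilly's graph-theoretic solution of the word problem for the $5$-element Brandt semigroup $B_{2}$ and the known finite basis of $\mathbf{L_{2}}\vee\mathbf{B_{2}}$, then analyzing an explicit coordinatization of $W_{\lambda}$ as an extension of $B_{2}$. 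Until you actually carry out the derivations showing that each proposed $\Sigma_{S}$ entails every identity of $S$, the proof is a programme rather than an argument, and as stated the programme starts from a description of $S$ that does not match the semigroups in question.
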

Note that, there exists a finite semigroup which has a polynomially decidable equational theory while generates an infinitely based variety \cite{Volkov3}.

\section{Background}

We adopt the standard terminology and notation of semigroup theory
(see \cite{Clifford1}) and universal algebra (\cite{Burris1}). For
reader's convenience, we recall a few basic definitions, notation and
results appeared below.

Denote by $\mathcal{X}$ countably infinite set (the \emph{alphabet}) whose
elements are referred to as \emph{letters}. Let $\mathcal{X^{+}}$ be the \emph{free semigroup} over $\mathcal{X}$. Elements of $\mathcal{X^{+}}$ are referred to as \emph{words}.

If $x$ is a letter and $u$ is a word, then $Occ(x,u)$ denotes the
number of occurrences of $x$ in $u$. If $Occ(x,u)>0$, then we say that
the word $u$ contains $x$. The \emph{content} of $u$ is the set $C(u)$ of
letters occurring in $u$. Denote by $l(u)$ the \emph{length} of $u$, that is the number of letters in $u$ counting multiplicity. The \emph{head}
(respectively, \emph{tail}) of $u$ is the first (respectively, the last) letter in $u$ and it is denoted by $h(u)$ (respectively, by $t(u)$). Further, for a word $u=x_{1}x_{2}\cdots x_{n}$ and an integer $s\leq n$ we denote by
$h_{s}(u)$ the letter $x_{s}$ and by $t_{s}(u)$ the letter
$x_{n-s+1}$; in particular $h_{1}(u)=h(u)$ is the head and
$t_{1}(u)=t(u)$ is the tail of the word $u$.

We write $u\approx v$ to stand for a \emph{semigroup identity}. An identity is \emph{non-trivial} if $u\neq v$ as elements of $\mathcal{X^{+}}$. A
non-trivial identity is called a \emph{permutational identity} if it is of the form $x_{1}x_{2}...x_{n}\approx x_{\pi(1)}x_{\pi(2)}...x_{\pi(n)}$, where
$x_{1},...,x_{n}$ are distinct letters in $X$ and $\pi$ is a
non-trivial permutation of $\{1,...,n\}$. The symmetric group on $n$ symbols is denoted by $S_{n}$.

A letter $x$ is \emph{simple} in the word $u$ if it occurs exactly once in $u$. A word $u$ is simple if all of its letters are simple in it. The set of
all simple letters of a word $u$ is denoted by $S(u)$. The \emph{left core} of a word $u$ is the simple word obtained from $u$ by retaining the
first occurrence of each letter, it is denoted by $LC(u)$. The \emph{right
core} of a word $u$ is defined dually, it is denoted by
$RC(u)$. For example, $LC(x^{6}y^{2}zxt^{2}xt^{7}s)=xyzts$ and
$RC(x^{6}y^{2}zxt^{2}xt^{7}s)=yzxts$.

Let $\Sigma$ be a set of identities. The \emph{deducibility} of an identity $u\approx v$ from the identities in $\Sigma$ is denoted by
$\Sigma\vdash u\approx v$. The \emph{variety} defined by $\Sigma$ is denoted by $V(\Sigma)$. The variety generated by a semigroup $S$ is denoted by $\mathbf{S}$ or $V(S)$. If a variety $V$ satisfies an identity $u\approx v$ we write $V\vDash u\approx v$.

Let $u$ be a word. The word $u$ is called an \emph{isoterm} in the variety $V$ if an identity $u\approx v$ holds in $V$ if and only if $u=v$. The word $u$ is called $(p,q)$\emph{-trivial} if $u$ is a isoterm in the variety defined by the identity $x_{1}x_{2}\cdots x_{p}\approx x_{p+1}^{q}$. An identity $u\approx v$ is called $(p,q)$\emph{-trivial} if words $u,v$ are $(p,q)$-trivial.

We denote by $L_{2}$ (respectively, $R_{2}$) the 2-element left-zero (right-zero) semigroups, by $N_{2}$ and $Y_{2}$ the 2-element semigroup with zero multiplication and 2-element semilattice, respectively, and by $C_{n}$ the cyclic group of order $n$. The cyclic semigroup $\langle c \mid c^{r}=c^{r+d}\rangle$ of index $r$ and period $d$ is denoted by $C_{r,d}$. For convenience, let us denote by $N_{k}$ the semigroup $C_{k,1}$.

Let $S$ be a semigroup. The semigroup $S^{1}$ means the semigroup arising from a semigroup $S$ by adjunction of an identity element $1$, unless $S$ already has an identity, in which case $S^{1}=S$.

\begin{lemma} The following statements holds
\label{MSB}
  \begin{arabiclist}
    \item The variety $\mathbf{L_{2}}$ (respectively, $\mathbf{R_{2}}$) is given by the identity $x\approx xy$ (respectively, $x\approx yx$) and satisfies an identity $u\approx v$ if and only if $h(u)=h(v)$ (respectively, $t(u)=t(v)$).
    \item The variety $\mathbf{L_{2}^{1}}$ (respectively, $\mathbf{R_{2}^{1}}$) is given by the identities $x\approx x^{2}, xy\approx xyx$ (respectively, $x\approx x^{2}, xy\approx yxy$) and satisfies an identity $u\approx v$ if and only if $LC(u)=LC(v)$ (respectively, $RC(u)=RC(v)$).
    \item The variety $\mathbf{Y_{2}}$ is given by the identities $x\approx x^{2}, xy\approx yx$ and satisfies an identity $u\approx v$ if and only if $C(u)=C(v)$.
    \item The variety $\mathbf{C_{n}}$ is given by the identities $x\approx xy^{n}, xy\approx yx$ and satisfies an identity $u\approx v$ if and only if $Occ(x,u)\equiv Occ(x,v)$ mod $n$ for any letter $x\in\mathcal{X^{+}}$.
    \item The variety $\mathbf{N_{2}}$ is given by the identity $xy\approx z^{2}$ and satisfies a non-trivial identity $u\approx v$ if and only if $l(u),l(v)>1$.
    \item The variety $\mathbf{N_{2}^{1}}$ is given by the identities $x^{2}\approx x^{3}, xy\approx yx$ and satisfies an identity $u\approx v$ if and only if $C(u)=C(v)$ and $S(u)=S(v)$.
    \item The variety $\mathbf{N_{3}}$ is given by the identities $xyz\approx w^{3}$ and $xy\approx yx$, and satisfies an identity $u\approx v$ if and only if either $l(u),l(v)\geq3$ or $u\approx v$ is equivalent to $xy\approx yx$.
  \end{arabiclist}
\end{lemma}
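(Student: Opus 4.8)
The plan is to prove each of the seven items by the same two-step pattern: first verify that the listed identities generate the stated variety, then characterise the identities satisfied by that variety via a combinatorial invariant of words. For the generation step I would check directly that the given semigroup satisfies each listed identity, and then show that the listed identities force every word to be reducible to a canonical form determined by the relevant invariant (head letter for $\mathbf{L_2}$; tail letter for $\mathbf{R_2}$; left core for $\mathbf{L_2^1}$; right core for $\mathbf{R_2^1}$; content for $\mathbf{Y_2}$; content together with the exponent-vector mod $n$ for $\mathbf{C_n}$; ``is the length $>1$'' for $\mathbf{N_2}$; content together with the set of simple letters for $\mathbf{N_2^1}$; and for $\mathbf{N_3}$, length $\geq 3$ together with commutativity). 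Since two words have the same canonical form precisely when the stated invariant coincides, this already gives the ``if'' direction of the characterisation.

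For the converse (``only if'') direction in each item I would exhibit a concrete homomorphism from $\mathcal{X}^{+}$ into the semigroup in question that \emph{separates} words with different invariants, thereby showing the identity must fail. For instance, for $\mathbf{L_2}$ one sends every letter of $C(u)$ appropriately into $L_2 = \{a,b\}$ so that the value of a word is its head's image; for $\mathbf{Y_2}$ one uses the characteristic-function homomorphism $x \mapsto [x \in C(u)]$ into the semilattice; for $\mathbf{C_n}$ one uses the abelianisation composed with reduction mod $n$; for $\mathbf{N_2}$ one notes that any word of length $\geq 2$ maps to $0$, while a single letter maps to itself, so the only non-trivial identities are those equating two words each of length $>1$; for $\mathbf{N_2^1}$ one combines the $\mathbf{Y_2}$ homomorphism with, for each letter $x$, a homomorphism into $N_2^1 \cong \{1,c,0 \mid c^2 = 0\}$ detecting whether $x$ is simple. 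The cases $\mathbf{L_2^1}, \mathbf{R_2^1}, \mathbf{N_3}$ are handled by analogous but slightly more elaborate separating homomorphisms (e.g. for $\mathbf{L_2^1}$ one reads off the left core by a sequence of maps into $L_2^1$).

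The main obstacle I expect is the generation step for the ``$1$-adjoined'' and length-graded cases — $\mathbf{L_2^1}$, $\mathbf{R_2^1}$, $\mathbf{N_2^1}$, and $\mathbf{N_3}$ — where one must argue that the given two or three identities genuinely suffice to reduce an arbitrary word to the canonical form, i.e. that no finer invariant survives. For $\mathbf{L_2^1}$ this means showing that from $x \approx x^2$ and $xy \approx xyx$ one can delete any occurrence of a letter that has already appeared earlier, thus reducing $u$ to $LC(u)$; this is a straightforward but careful induction on length, repeatedly using $xy \approx xyx$ to move a duplicate letter to be adjacent to its earlier occurrence and then $x \approx x^2$ to absorb it. The analogous argument for $\mathbf{N_2^1}$ must additionally track which letters remain simple, and for $\mathbf{N_3}$ one must handle separately the borderline identities of length exactly $2$, where only the permutational identity $xy \approx yx$ is available. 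Once these reductions are established the separation homomorphisms close each case, and the lemma follows.
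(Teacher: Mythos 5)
The paper offers no proof of this lemma at all --- it simply declares the seven statements ``well-known results and easy to prove'' --- so there is nothing to compare your argument against except the standard folklore proofs, and your two-step scheme (reduce to a canonical form using the listed identities, then separate distinct canonical forms by explicit homomorphisms into the generating semigroup) is exactly the standard argument and is sound. One detail needs correcting: for $\mathbf{C_{n}}$ the canonical-form invariant is \emph{not} ``content together with the exponent-vector mod $n$'' but the exponent-vector mod $n$ alone; the defining identity $x\approx xy^{n}$ changes the content (it deletes the letter $y$ outright), so content is not preserved by deduction, and indeed the lemma's characterisation permits $C(u)\neq C(v)$ as long as every letter's occurrence counts agree mod $n$ (e.g.\ $x\approx xy^{n}$ itself). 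Your separating homomorphism for that case (abelianisation followed by reduction mod $n$, sending all other letters to the identity of $C_{n}$) is the right one, so only the description of the canonical form needs adjusting. The remaining cases, including the more delicate reductions for $\mathbf{L_{2}^{1}}$, $\mathbf{N_{2}^{1}}$ and the length-$2$ borderline for $\mathbf{N_{3}}$, are correctly identified and handled as you describe.
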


\begin{proof}
These result are well-known results and easy to prove.
\end{proof}

An element $a\in S$ is \emph{indecomposable} if the equation $a=xy$ has no
solutions in $S$. It is easy to see that if a finite semigroups $S$ has an indecomposable element then $N_{2}\in \mathbf{S}$.

\begin{lemma}\label{IBSINTRO}(Kublanovsky \cite{Kublanovsky1}, Theorem 1) A semigroup variety $V$ is a Rees---Sushkevich variety if and only if it contains none of the following semigroups:

$A=\langle x,y \mid x=x^{2}; y^{2}=0; xy=yx\rangle$

$B=\langle x,y \mid x^{2}=0; y^{2}=0; xyx=yxy\rangle$

$C_{\lambda}=\langle x,y \mid x^{2}=x^{3}; xy=x; x^{2}y=0; y^{2}=0\rangle$

$C_{\rho}=\langle x,y \mid x^{2}=x^{3}; yx=x; yx^{2}=0; y^{2}=0\rangle$

$N_{3}=\langle x \mid x^{3}=0\rangle$

$D=\langle x,y \mid x^{2}=0; y=y^2; yxy=0\rangle$

$K_{n}=\langle x,y \mid x^{2}=0; y^{2}=y^{n+2}; yxy=0; xy^{q}x=0, (q=2,...,n); xyx=xy^{n+1}x\rangle$

$F_{\lambda}=\langle x,y \mid xy=xyx=xy^{2}; yx=yxy=yx^{2};
x^{2}=x^{2}y=x^{3}; y^{2}=y^{2}x=y^{3} \rangle$

$F_{\rho}=\langle x,y \mid xy=yxy=x^{2}y; yx=xyx=y^{2}x;
x^{2}=yx^{2}=x^{3}; y^{2}=xy^{2}=y^{3}\rangle$

$W_{\lambda}=\langle a,x,y \mid a^{2}=x^{2}=y^{2}=xy=yx=0; ax=axax;
ay=ayay; xa=xaxa; ya=yaya; xay=xax; yax=yay\rangle$

$W_{\rho}=\langle a,x,y \mid a^{2}=x^{2}=y^{2}=xy=yx=0; xa=xaxa;
ya=yaya; ax=axax; ay=ayay; xay=yay; yax=xax\rangle$

$L_{2}^{1}=\langle a,x,y \mid x=x^{2}; y=y^{2}; a=a^{2}; xy=x; yx=y;
ax=xa=x; ay=ya=y\rangle$

$R_{2}^{1}=\langle a,x,y \mid x=x^{2}; y=y^{2}; a=a^{2}; xy=y; yx=x;
ax=xa=x; ay=ya=y\rangle$
\end{lemma}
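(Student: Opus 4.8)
This is a characterization theorem of Kublanovsky; the proof splits into the two implications, with essentially all of the work in the \emph{if} part.

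\emph{Only if.} The plan is to show that each indicator Burnside semigroup lies in no periodic variety generated by 0-simple semigroups, by exhibiting for each one an identity that holds throughout a sufficiently large Rees--Sushkevich variety but fails in it. It is known that every Rees--Sushkevich variety of bounded exponent is contained in some $\mathbf{RS_{n}}$, so --- after one checks that a Rees--Sushkevich variety cannot have unbounded exponent, since forbidding $N_{3}$ bounds the index and forbidding the semigroups $K_{n}$ bounds the period --- it suffices to exhibit, for each indicator Burnside semigroup and each $n$, a failure of one of the three identities
\[
x^{2}=x^{n+2},\qquad xyx=(xy)^{n+1}x,\qquad (xhz)^{n}xyz=xyz(xhz)^{n}
\]
that define $\mathbf{RS_{n}}$ \cite{Hall1}. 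For example $N_{3}$ violates $x^{2}=x^{n+2}$, and in $A$ one has $xyx=x^{2}y=xy$ and $(xy)^{2}=x^{2}y^{2}=0$, so $xyx=xy\neq 0=(xy)^{n+1}x$; computations of the same kind dispose of $B$, $C_{\lambda}$, $C_{\rho}$, $D$, $F_{\lambda}$, $F_{\rho}$, the $K_{n}$, $W_{\lambda}$, $W_{\rho}$, $L_{2}^{1}$ and $R_{2}^{1}$.

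\emph{If.} Assume $V$ contains none of the listed semigroups. Using the absence of $N_{3}$ and of the $K_{n}$ one first shows that $V$ is periodic of bounded exponent, and fixes $n$ with $V\vDash x^{2}\approx x^{n+2}$; it then suffices to prove $V\subseteq W$, where $W$ is the periodic variety generated by all 0-simple semigroups of exponent dividing $n$. Since $V$ is the join of the varieties $V(S)$ over its subdirectly irreducible members, and each such $S$ is periodic, the task reduces to showing that every subdirectly irreducible $S\in V$ divides a product of 0-simple semigroups of exponent dividing $n$. The plan is to dissect $S$ via its Green's relations and principal factors: the absence of $A$, $D$, $L_{2}^{1}$ and $R_{2}^{1}$ constrains how the idempotents of $S$ interact with the remaining elements (these are precisely the minimal ``left or right unit glued to a bad core'' configurations), while the absence of $B$, $F_{\lambda}$, $F_{\rho}$, $W_{\lambda}$, $W_{\rho}$ and of the $K_{n}$ constrains the combinatorics inside and between the principal factors. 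One then argues that each principal factor is forced to be completely 0-simple over a group of exponent dividing $n$, or a null semigroup, or one of a few small semigroups, and that the factors are assembled in a benign way, so that $S$ is realized inside a product of 0-simple semigroups of the required kind.

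\emph{The main obstacle} is the structural case analysis in the \emph{if} direction: one must prove that \emph{any} subdirectly irreducible semigroup not of the above form already contains an isomorphic copy of one of the thirteen indicator Burnside semigroups, which amounts to explicitly locating the relevant $2$- or $3$-generated subsemigroup inside an arbitrary ``bad'' $S$. It is exactly in making this case analysis exhaustive that the precise composition of the list is pinned down.
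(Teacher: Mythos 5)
The paper does not prove this statement at all: it is quoted verbatim from Kublanovsky's preprint (\cite{Kublanovsky1}, Theorem 1) and used as a black box, so there is no internal proof to measure your attempt against. Judged on its own terms, your \emph{only if} direction is essentially right in outline: since any periodic variety generated by $0$-simple semigroups is generated by completely $0$-simple semigroups over groups of exponent dividing some $n$, it suffices to show each listed semigroup fails one of the three identities defining $\mathbf{RS_{n}}$ for every $n$, and your computation for $A$ (namely $xyx=xy\neq 0=(xy)^{n+1}x$) is correct. But even here you verify only $N_{3}$ and $A$ and dismiss the other eleven as ``computations of the same kind''; several are not immediate (for $L_{2}^{1}$, for instance, the identities $x^{2}\approx x^{n+2}$ and $xyx\approx(xy)^{n+1}x$ both \emph{hold}, and one must substitute the adjoined identity element into $(xhz)^{n}xyz\approx xyz(xhz)^{n}$ to reduce it to $x^{n}y\approx yx^{n}$, which fails in $L_{2}$). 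Also, your parenthetical about forbidding $N_{3}$ and the $K_{n}$ to bound index and period belongs to the converse direction, not here: a Rees--Sushkevich variety is periodic by definition, hence of bounded exponent automatically.

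The genuine gap is the \emph{if} direction, which is the entire content of the theorem. What you give there is a plan, not a proof: ``one then argues that each principal factor is forced to be completely $0$-simple over a group of exponent dividing $n$, or a null semigroup, \dots and that the factors are assembled in a benign way'' is precisely the statement to be proved, and you explicitly flag the exhaustive case analysis --- locating a copy (or divisor) of one of the thirteen semigroups inside an arbitrary semigroup that fails to lie in $\mathbf{RS_{n}}$ --- as ``the main obstacle'' without carrying it out. There is also a structural worry in the reduction itself: membership of $V$ in $\mathbf{RS_{n}}$ is an equational condition on all of $V$, and it is not enough that each subdirectly irreducible member of $V$ merely contain no copy of the listed semigroups as subsemigroups; the argument must rule out the indicator semigroups as divisors, or work directly with the identities of $\mathbf{RS_{n}}$, and nothing in your sketch explains how ``each principal factor is of the required form'' yields an embedding of $S$ into a product of $0$-simple semigroups. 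As it stands the proposal identifies the right skeleton but leaves the theorem unproved.
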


These semigroups are called \emph{indicator Burnside semigroups}.

\begin{lemma}\label{PERKISCOM}(Perkins \cite{Perkins2}, Theorem ?) Each commutative semigroup generate a finitely based variety.
\end{lemma}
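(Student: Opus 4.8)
The plan is to prove the statement by reducing it to a single, well-behaved family of varieties and then applying a well-quasi-order argument. Let $S$ be any commutative semigroup; then $V(S)$ is a subvariety of the variety $\mathbf{Com}$ of all commutative semigroups, which is defined by the single identity $xy\approx yx$. I would first split into two cases according to periodicity. If $V(S)$ satisfies no identity of the form $x^{n}\approx x^{n+m}$, then its free one-generated object is the free monogenic semigroup $\langle x\rangle$; since an identity $u\approx v$ holds in $\langle x\rangle$ precisely when $Occ(x_{i},u)=Occ(x_{i},v)$ for every letter $x_{i}$, the semigroup $\langle x\rangle$ already generates $\mathbf{Com}$, so $V(S)=\mathbf{Com}$ is finitely based. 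It therefore remains to treat the periodic case, where $V(S)\vDash x^{n}\approx x^{n+m}$ for some $n,m$; here $V(S)$ is locally finite and is contained in the finitely based variety $W:=V(xy\approx yx,\ x^{n}\approx x^{n+m})$, and the whole problem reduces to showing that every subvariety of such a $W$ is finitely based.

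Next I would fix a normal form. Modulo commutativity every word is a monomial $x_{1}^{a_{1}}\cdots x_{r}^{a_{r}}$, and modulo $x^{n}\approx x^{n+m}$ each exponent can be reduced into the finite set $E=\{0,1,\dots,n+m-1\}$ (an exponent $\geq n$ being replaced by the congruent value in $\{n,\dots,n+m-1\}$ modulo $m$). Consequently, modulo $W$, an identity $u\approx v$ is completely described by the finitely supported function sending each letter $x$ to the pair $(Occ(x,u),Occ(x,v))\in E\times E$; since the order of letters is irrelevant, this datum is a vector $\mathbf{c}\in\mathbb{N}^{A}$ counting the letters of each non-trivial type, where $A=(E\times E)\setminus\{(0,0)\}$ is finite. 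I would split the types into the \emph{balanced} ones $(a,a)$ and the \emph{unbalanced} ones $(a,b)$ with $a\neq b$.

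The engine of the proof is Dickson's lemma applied to these type-vectors. Multiplying both sides of an identity by a fresh power $x^{a}$ adjoins one letter of the balanced type $(a,a)$ and is a legitimate deduction; hence, for each fixed pattern of unbalanced types, the set of balanced-coordinate vectors realised by identities of $V(S)$ is upward closed, and Dickson's lemma provides finitely many minimal ones. Dually, substituting a single letter by a product of fresh letters broadcasts one unbalanced type onto arbitrarily many letters, so identities of large unbalanced multiplicity are deducible from ones of small multiplicity. Assembling these facts, the finitely many minimal identities of $V(S)$, together with $xy\approx yx$ and $x^{n}\approx x^{n+m}$, should constitute a finite basis. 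The main obstacle is the last point in the \emph{semigroup} (non-cancellative) setting: one cannot simply erase a letter, so bounding the multiplicities of the unbalanced types --- equivalently, bounding the number of letters appearing in a minimal identity --- is delicate. I expect to resolve this by passing to the commutative monoid $S^{1}$, where a letter may be substituted by the identity element and multiplicities collapse immediately, closing the Dickson argument cleanly, and then transferring the resulting finite basis back to $V(S)$; controlling this monoid-to-semigroup transfer is the technical heart of the proof.
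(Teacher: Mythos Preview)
The paper does not prove this lemma: it is quoted from Perkins with a bare citation and no argument, and its only use is the one-line remark before Lemma~\ref{AIBS} that $\mathbf{A}$ is finitely based because $A$ is commutative --- a remark made redundant immediately afterwards when an explicit two-identity basis is exhibited. There is therefore nothing in the paper to compare your argument against.

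On its own merits, your outline follows the classical route --- reduce to the periodic case, encode identities modulo $xy\approx yx$ and $x^{n}\approx x^{n+m}$ by their exponent-type multisets over the finite alphabet $E\times E$, and invoke Dickson's lemma --- and this is indeed the strategy behind Perkins' proof. But the step you yourself flag as ``the technical heart'' is a genuine gap, not a routine detail. Passing to $S^{1}$ lets you substitute letters by $1$ and hence collapse multiplicities, but that only shows $V(S^{1})$ is finitely based; since in general $V(S^{1})\subsetneq V(S)$, you would still have to prove that $V(S)$ is finitely based \emph{relative to} $V(S^{1})$, which is a problem of the same shape as the one you started with. Separately, your Dickson argument as written establishes only upward closure under multiplying by fresh powers and under broadcasting an existing type via substitution; it does not address how transitivity of $\approx$ interacts with the type-vectors, and the set of type-vectors realised by identities of $V(S)$ is not literally an upper set of $\mathbb{N}^{A}$. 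What is actually needed is a direct bound on the number of variables in a minimal identity (so that finitely many relatively free finite semigroups suffice), and your sketch does not yet supply one.
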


\begin{lemma}\label{SMALLS}(Trakhtman \cite{Trahtman3}, Theorem ?)
  Each semigroup with $\leq5$ elements generate a finitely based variety.
\end{lemma}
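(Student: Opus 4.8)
The plan is to prove the statement by a structured exhaustion of all semigroups of order at most five. The first step is to shrink the problem. Since the dual of a finite identity basis is again a finite identity basis, a semigroup $S$ generates a finitely based variety if and only if its opposite semigroup $S^{op}$ does; hence it suffices to run through the semigroups of order $\leq 5$ up to isomorphism \emph{and} anti-isomorphism. Up to this equivalence there are only finitely many such semigroups, and they admit an explicit finite list. The idea is then to partition this list into a few structural classes, dispatch each class with a general finite-basis theorem, and treat by hand the small number of semigroups that no general theorem covers.

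Second, I would clear away the routine classes. Every commutative semigroup is finitely based by Lemma \ref{PERKISCOM}, which settles all commutative members at once. Every finite group is finitely based by the theorem of Oates and Powell quoted in the introduction, disposing of the group members. Every band is finitely based, since each subvariety of the variety of idempotent semigroups is finitely based; this removes the idempotent members. Finally, a completely simple or completely $0$-simple semigroup of order $\leq 5$ whose structure group is nontrivial is a Rees matrix semigroup over a group of order at most four, and lies in a finitely based variety of the type $\mathbf{RS_n}$ recalled in the introduction; the corresponding members can be shown finitely based by fixing an explicit basis compatible with the small group involved.

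Third, for each remaining semigroup $S$ I would produce an explicit finite set $\Sigma_S$ of identities and argue it is a basis for $V(S)$. Verifying $S \vDash \Sigma_S$ is a finite computation; the real content is showing that every identity of $S$ follows from $\Sigma_S$. This I would do by using $\Sigma_S$ to rewrite an arbitrary word into a normal form, and then checking, with syntactic invariants in the spirit of Lemma \ref{MSB} (content, left and right cores, occurrence counts, the set of simple letters), that two normal forms satisfied by $S$ must be identical. The main obstacle is concentrated here, in the few group-trivial $0$-simple semigroups of order five, above all the five-element Brandt semigroup $B_2$ and the five-element semigroup $A_2$. These are the tight cases: adjoining an identity to $B_2$ already yields the six-element monoid $B_2^1$, which Perkins proved is \emph{not} finitely based, so the finite basis property at order five sits right at the boundary of failure. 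The normal-form reduction for $B_2$ and $A_2$ must therefore be carried out with particular care to guarantee that finitely many identities suffice; once these hard cases are resolved, assembling all the cases gives the theorem.
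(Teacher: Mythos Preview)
The paper does not prove Lemma~\ref{SMALLS} at all: it is stated as an external result attributed to Trakhtman \cite{Trahtman3}, with no argument supplied. There is therefore no ``paper's own proof'' to compare your proposal against; the lemma is invoked as a black box (and, in fact, is only used once, to observe that $C_\lambda$, having five elements, is finitely based---a fact the paper then re-derives anyway in Lemma~\ref{CIBSB}).

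Your outline is a credible reconstruction of how Trakhtman's argument actually proceeds: list the semigroups of order $\leq 5$ up to (anti-)isomorphism, strip away the commutative, group, and band members by general theorems, and handle the residue---notably $B_2$ and $A_2$---by explicit bases. One point to tighten: in your third paragraph you note that a small completely $0$-simple semigroup lies in some $\mathbf{RS_n}$, but membership in a finitely based variety does not by itself imply that the subsemigroup generates a finitely based subvariety (indeed $B_2^1$ lies in plenty of finitely based varieties yet is infinitely based). You acknowledge this implicitly by promising an ``explicit basis compatible with the small group involved,'' but that is where the work is, not in the $\mathbf{RS_n}$ observation. Otherwise the plan is sound, though of course the labour is almost entirely in the case-by-case normal-form computations you defer to the final step.
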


\section{Proof of main results}

To prove main results, we need to verify that each of indicator Burnside semigroups generates a finitely based and polynomially recognizable variety. For varieties $\mathbf{L_{2}^{1}},\mathbf{R_{2}^{1}}$ and $\mathbf{N_{3}}$, the desirable conclusion immediately follows from Lemma \ref{MSB}. Other indicator Burnside semigroups are considered below. The section is divided into six subsection.

\subsection{Semigroup $A$}
The variety $\mathbf{A}$ is finitely based because it is commutative (see Lemma \ref{PERKISCOM}). The following statement gives an identity basis of $\mathbf{A}$.

\begin{lemma}\label{AIBS} The variety $\mathbf{A}$ coincides with the variety $\mathbf{N_{2}^{1}}$, whence it is given by the identities $x^{2}\approx x^{3}$ and $xy\approx yx$.
\end{lemma}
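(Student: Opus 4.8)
The plan is to prove the two varieties coincide by showing mutual containment, using the syntactic description of $\mathbf{N_{2}^{1}}$ from Lemma~\ref{MSB}(6): an identity $u\approx v$ holds in $\mathbf{N_{2}^{1}}$ iff $C(u)=C(v)$ and $S(u)=S(v)$. First I would check that $A$ itself satisfies $x^2\approx x^3$ and $xy\approx yx$, so that $\mathbf{A}\subseteq\mathbf{N_{2}^{1}}$. From the presentation $A=\langle x,y\mid x=x^2;\ y^2=0;\ xy=yx\rangle$, commutativity is immediate, and one sees that $A$ has underlying set $\{x,y,xy,0\}$ (with $x$ idempotent, $y$ nilpotent of index $2$): then $a^2=a^3$ is verified element by element. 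Hence every identity of $\mathbf{N_{2}^{1}}$ holds in $A$.

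For the reverse containment $\mathbf{N_{2}^{1}}\subseteq\mathbf{A}$, I would argue that every identity $u\approx v$ satisfied by $A$ also satisfies $C(u)=C(v)$ and $S(u)=S(v)$, so that $A\vDash u\approx v$ forces $\mathbf{N_{2}^{1}}\vDash u\approx v$. The content condition follows because $Y_2$ is a homomorphic image of (a subsemigroup of) $A$ — indeed $\{x,0\}$ with $x$ idempotent and $0$ absorbing is the semilattice $Y_2$, so by Lemma~\ref{MSB}(3) any identity of $\mathbf{A}$ has $C(u)=C(v)$. For the simple-letter condition $S(u)=S(v)$: if some letter $z$ is simple in $u$ but occurs at least twice in $v$ (say $Occ(z,v)\geq 2$), then substituting $z\mapsto y$ and every other letter $\mapsto x$ (the idempotent) into $u\approx v$ gives, in $A$, the element $x\cdot y\cdot x=xy\neq 0$ on the left but $xy\cdots y\cdots = 0$ on the right (using $y^2=0$ and $xy=yx$), a contradiction. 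Combined with $C(u)=C(v)$ this yields $S(u)=S(v)$, completing the argument.

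The last-stated conclusion, that $\mathbf{A}$ is given by $x^2\approx x^3$ and $xy\approx yx$, is then just Lemma~\ref{MSB}(6) applied to $\mathbf{N_{2}^{1}}=\mathbf{A}$.

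The main obstacle I anticipate is the careful bookkeeping in the substitution argument for $S(u)=S(v)$: one must handle the possibility that a letter simple in $u$ is \emph{absent} from $v$ (ruled out by the already-established $C(u)=C(v)$) separately from the possibility that it occurs with multiplicity $\geq 2$, and one must make sure the chosen substitution $z\mapsto y$, others $\mapsto x$ genuinely distinguishes the two sides in $A$ — which it does precisely because $y$ is the unique non-idempotent generator and $y^2=0$. Everything else is a routine finite check.
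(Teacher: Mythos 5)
Your proof is correct, but it routes the two containments differently from the paper. The paper gets $\mathbf{N_{2}^{1}}\subseteq\mathbf{A}$ structurally, by exhibiting $N_{2}^{1}$ as a homomorphic image of $A$ (namely $x\mapsto 1$, $y,xy\mapsto a$, $0\mapsto 0$), and then obtains $\mathbf{A}\subseteq\mathbf{N_{2}^{1}}$ by contradiction: if some identity of $N_{2}^{1}$ failed in $A$ under a substitution $\phi$, then composing with that homomorphism forces $\{\phi(u),\phi(v)\}=\{y,xy\}$, and since $y$ is indecomposable in $A$ the word $u$ must be a single letter, which is an isoterm in $\mathbf{N_{2}^{1}}$ --- a contradiction. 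You instead make $\mathbf{A}\subseteq\mathbf{N_{2}^{1}}$ the easy direction (a finite check of $x^{2}\approx x^{3}$ and $xy\approx yx$ on the four elements $\{x,y,xy,0\}$) and do the work in the other direction, verifying via the subsemigroup $\{x,0\}\cong Y_{2}$ and the substitution $z\mapsto y$, rest $\mapsto x$ that every identity of $A$ satisfies $C(u)=C(v)$ and $S(u)=S(v)$, hence holds in $N_{2}^{1}$ by Lemma~\ref{MSB}(6). Both arguments are sound; yours is more self-contained and makes the equational content explicit (it directly exhibits why the solution of the word problem for $N_{2}^{1}$ transfers to $A$), while the paper's use of the homomorphism and the indecomposability of $y$ avoids any substitution bookkeeping and is the shorter of the two once Lemma~\ref{MSB}(6) is taken for granted.
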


\begin{proof} In view of Lemma \ref{MSB}, it is suffices to verify that $\mathbf{A}=\mathbf{N_{2}^{1}}$. It is easy to see that $\mathbf{N_{2}^{1}}\subset\mathbf{A}$. Indeed, $N_{2}^{1}$ is the homomorphic image of the semigroup $A$ under the homomorphism that maps $x$ into $1$, $xy$ and $y$ into $a$, and $0$ into $0$. Suppose that there exists an identity $u\approx v$ such that $\mathbf{N_{2}^{1}}\vDash u\approx v$ but $\mathbf{A}\nvDash u\approx v$. Then there exists a map $\phi:\mathcal{X^{+}}\rightarrow A$ such that $\phi(u)\neq\phi(v)$. But $\sigma(\phi(u))=\sigma(\phi(v))$ because $\mathbf{N_{2}^{1}}\vDash u\approx v$. Therefore, without loss of generality we can assume that $\phi(u)=y$, $\phi(v)=xy$. But the element $y$ is indecomposable
in the semigroup $A$. In this case the word $u$ consists of one
letter $u=x$. But the word $u$ is an isoterm in the variety
$\mathbf{N_{2}^{1}}$. Therefore, $u\equiv v$ and $\mathbf{A}$ satisfies the identity $u\approx v$. This contradiction completes the proof.
\end{proof}

\subsection{Semigroup $B$}

The semigroup $B$ is a 4-nilpotent semigroup, and it is easy to verify
that each finite nilpotent semigroup is finitely based and generate a
variety whose finite membership problem admits a linear algorithm.

\begin{lemma}\label{BIBSIB} The variety $\mathbf{B}$ satisfies a non-trivial identity $u\approx v$ if and only if one of the following statements hold:

\begin{arabiclist}
  \item Words $u$, $v$ are not $(4,2)$-trivial,
  \item $u\approx v$ has the form $abc\approx cba$,
  \item $u\approx v$ has the form $aba\approx bab$.
\end{arabiclist}
\end{lemma}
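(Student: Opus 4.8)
The plan is to analyze the structure of $B=\langle x,y \mid x^{2}=0; y^{2}=0; xyx=yxy\rangle$ directly and read off which identities it satisfies. First I would enumerate the elements of $B$: starting from $x,y$ and multiplying, using $x^2=y^2=0$ we get the non-zero elements $x,y,xy,yx,xyx=yxy$, and everything of length $\geq 4$ collapses to $0$ (since any word of length $4$ in $x,y$ contains $x^2$ or $y^2$ as a factor, or equals $xyxy=x\cdot yxy\cdot$... wait, $xyxy = (xyx)y = (yxy)y = yx\cdot y^2 = 0$, and similarly $yxyx=0$). So $B$ has $6$ elements: $\{0,x,y,xy,yx,w\}$ where $w=xyx=yxy$, and $B$ is $4$-nilpotent as claimed. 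The key observation is that the product $\phi(u)$ of a word $u$ under a substitution $\phi:\mathcal{X}^+\to B$ is nonzero only in very restricted circumstances, and I would make this precise.

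Next I would prove the ``if'' direction. For (1): if $u,v$ are both not $(4,2)$-trivial, then each of them is identified either with a word of length $\geq 4$ or with a square $z^2$ in the variety $V(x_1x_2x_3x_4\approx x_5^2)$; in either case, under any substitution into the $4$-nilpotent semigroup $B$ the value is forced to be $0$ (a substituted square is $0$ since $z^2\mapsto \phi(z)^2$ and every element of $B$ squares to... I should check: $x^2=0$, $(xy)^2=xyxy=0$, $(yx)^2=0$, $w^2 = xyx\cdot xyx=0$ — yes, every element squares to $0$), so $\phi(u)=0=\phi(v)$ and the identity holds. For (2) and (3): $B$ satisfies $abc\approx cba$ and $aba\approx bab$ — the first holds because a product of three distinct-substitution elements is nonzero in $B$ only when it is $xyx$ or $yxy$ type, and one checks $\phi(a)\phi(b)\phi(c)=\phi(c)\phi(b)\phi(a)$ in all remaining cases by the defining relation and nilpotency; the second is literally the defining relation $xyx=yxy$ together with the fact that $aba=0=bab$ whenever $\{\phi(a),\phi(b)\}$ is not (up to order) $\{x,y\}$. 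These verifications are finite case-checks over the $6$-element semigroup.

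For the ``only if'' direction I would argue contrapositively: suppose $u\approx v$ is a non-trivial identity of $\mathbf{B}$ not of the forms in (2) or (3); I must show $u,v$ are not $(4,2)$-trivial, i.e. it is \emph{not} the case that both are $(4,2)$-trivial. Assume for contradiction both $u$ and $v$ are $(4,2)$-trivial. Being $(4,2)$-trivial means $u$ is an isoterm for $x_1x_2x_3x_4\approx x_5^2$, which forces $l(u)\leq 3$ and $u$ simple, or more precisely $u$ has no repeated-letter structure forcing it into $0$; I would pin down that $(4,2)$-trivial words are exactly the words of length at most $3$ that are square-free, i.e. of length $1$, or $ab$ with $a\neq b$, or $abc$ with $abc$ square-free (equivalently $a\neq b$, $b\neq c$; and either $a\neq c$ giving $abc$ with three distinct letters, or $a=c$ giving $aba$). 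Then I run through the finitely many shapes of a non-trivial identity $u\approx v$ between two such words: matching lengths (since $B\in\mathbf{N_2}$ forces... actually $B$ need not preserve length, but the content/simple-letter analysis via the homomorphism onto small semigroups will constrain it), and use that $B$ maps onto $L_2$, $R_2$, $N_2$ etc. to show $h(u)=h(v)$, $t(u)=t(v)$, $C(u)=C(v)$. Combined with square-freeness and length $\leq 3$, the only surviving non-trivial possibilities are $abc\approx cba$ and $aba\approx bab$, contradiction.

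The main obstacle I anticipate is the bookkeeping in the ``only if'' direction: showing that no ``stray'' non-trivial identity between short square-free words survives, which requires carefully extracting enough structural invariants ($h$, $t$, $C$, $S$, and the period-related invariants) that $\mathbf{B}$ preserves — here I would lean on the fact that $B$ has homomorphic images among the small semigroups of Lemma \ref{MSB} (notably $N_2$, $L_2$, $R_2$, $N_3$, and $N_2^1$), so that $\mathbf{B}$ inherits all their identity-separating power, and then check that the residual ambiguity is exactly the two permutational-type identities listed. The ``if'' direction and the element enumeration are routine finite verifications.
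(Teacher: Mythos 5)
Your element enumeration of $B$, the characterization of $(4,2)$-trivial words as the square-free words of length at most $3$, and the ``if'' direction (nilpotency kills every non-$(4,2)$-trivial word, finite checks for $abc\approx cba$ and $aba\approx bab$) are all correct and essentially match the paper's argument. The problem is in your ``only if'' direction, where you propose to ``lean on the fact that $B$ has homomorphic images among the small semigroups of Lemma \ref{MSB} (notably $N_2$, $L_2$, $R_2$, $N_3$, and $N_2^1$)'' to extract the invariants $h(u)=h(v)$, $t(u)=t(v)$, $C(u)=C(v)$. This fails: $B$ is $4$-nilpotent, so every semigroup in $\mathbf{B}$ satisfies $x_1x_2x_3x_4\approx x_5^2$, and none of $L_2$, $R_2$, $N_2^1$, $N_3$ satisfies that identity ($L_2$ and $R_2$ have non-zero idempotents; in $N_2^1$ take all variables to be $1$ on the left and a nilpotent on the right; in $N_3$ one has $c^2\neq 0=(c^2)^2$, so even $x^2\approx y^2$, which holds in $B$, fails). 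Only $N_2$ of your list actually lies in $\mathbf{B}$. Worse, the invariants themselves are not preserved by $\mathbf{B}$: the lemma you are proving asserts that $abc\approx cba$ holds in $\mathbf{B}$, and for that identity $h(u)\neq h(v)$ and $t(u)\neq t(v)$. So carrying out your plan as stated would ``prove'' that $abc\approx cba$ is not an identity of $\mathbf{B}$, contradicting your own ``if'' direction.

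What is actually needed in the case analysis is a direct substitution argument into $B$ itself rather than an appeal to nonexistent quotients. This is what the paper does: for two $(4,2)$-trivial words one first separates lengths (a single letter cannot equal a product of two or three elements since $x,y$ are indecomposable, and $xy$ has no factorization into three nonzero factors, so $ab$ is an isoterm), reduces to $l(u)=l(v)=3$ with $C(u)=C(v)$, and then, for simple $u=x_1x_2x_3$ and $v=x_{\pi(1)}x_{\pi(2)}x_{\pi(3)}$, sends $x_1,x_3\mapsto x$ and $x_2\mapsto y$: if $\pi(2)\neq 2$ then $\phi(v)$ contains the factor $x^2$ and is $0$ while $\phi(u)=xyx\neq 0$, forcing $v=x_3x_2x_1$. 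The non-simple case $u=x_1x_2x_1$ then gives $v=x_2x_1x_2$ by content and non-triviality. Your outline is salvageable, but the key step must be replaced by these explicit substitutions; the invariant-extraction route you describe is not available for a nilpotent semigroup.
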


\begin{proof}\textbf{Necessity.} Assume that identity $u\approx v$ holds in the variety $\mathbf{B}$. It is easy to see that for all $x_{1}, x_{2}, x_{3}, x_{4}, x_{5}\in B$ we have $x_{1}x_{2}x_{3}x_{4}=x_{5}^{2}=0$. Therefore, if there exists a homomorphism such that $\phi(u)\neq0$ then $l(u)\leq4$ and $x^{2}$ is not a subword of $u$ for all $x\in\mathcal{X}$. Hence, $u$ is $(4,2)$-trivial if and only if $v$ is $(4,2)$-trivial too. Assume that the statement 1 does not hold. Then it is easy to see that words $u$ and $v$ has the same length, because words $u$, $v$ are not $(p,q)$-trivial. We observe also that the word $ab$ is isoterm in the variety $\mathbf{B}$. Thus, we can suppose that $l(u)=l(v)=3$. In this case we find that $C(u)=C(v)$. First, suppose that the word $u$ is simple. Then $u$, $v$ are products of three different letter, $u=x_{1}x_{2}x_{3}$ and $v=x_{\pi(1)}x_{\pi(2)}x_{\pi(3)}$. If $\pi(2)\neq2$ then $x_{1}x_{3}$ or $x_{3}x_{1}$ is a subword of $v$. In this case we can consider the homomorphism, such that:
$$\phi(a)=\begin{cases}
x, &\text{if either }  a=x_{1} \text{ or } a=x_{3};\\
y, &\text{otherwise}
\end{cases}$$
Then we can note that $\phi(v)=xxy$ or $\phi(v)=yxx$. In both cases we find that $\phi(v)=0$ but $\phi(u)=xyx$. A contradiction. This shows that $v=x_{3}x_{2}x_{1}$. This means that the identity $u\approx v$ has the form $abc\approx cba$.

It remains to consider the case when the word $u$ is not simple. Then $u$ equals $x_{1}x_{2}x_{1}$. Since $C(u)=C(v)$ and the identity $u\approx v$ is not trivial, we can observe that the word $v$ equals $x_{2}x_{1}x_{2}$. This means that the identity $u\approx v$ has the form $aba\approx bab$.

\textbf{Sufficiency.} It is easy to see that if statements 2 or 3 hold then the identity $u\approx v$ holds in the variety $\mathbf{B}$. If the statement 1 holds then $\phi(u)=\phi(v)$ for all homomorphisms $\phi:\mathcal{X}\rightarrow B$. In other words, the identity $u\approx v$ holds in the variety $\mathbf{B}$.
\end{proof}

Lemma \ref{BIBSIB} allows to find an identity basis of the variety $\mathbf{B}$.

\begin{lemma}\label{BIBSB} The identities
  \begin{align}
\label{a2=bcde}
& a^{2}\approx bcde,\\
\label{abc=cba}
& abc\approx cba,\\
\label{aba=bab}
& aba\approx bab
\end{align}
form an identity basis of the variety $\mathbf{B}$.
\end{lemma}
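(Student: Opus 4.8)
The plan is to combine soundness of the three identities with the description of the equational theory of $\mathbf{B}$ already obtained in Lemma~\ref{BIBSIB}. Soundness is routine. For (\ref{a2=bcde}): in the variety $V(x_{1}x_{2}x_{3}x_{4}\approx x_{5}^{2})$ the word $a^{2}$ equals a product of four fresh letters (substitute $x_{5}\mapsto a$ in $x_{5}^{2}\approx x_{1}x_{2}x_{3}x_{4}$) and the word $bcde$ equals a square, so neither of them is an isoterm of that variety, i.e.\ neither is $(4,2)$-trivial; hence statement~1 of Lemma~\ref{BIBSIB} yields $\mathbf{B}\vDash(\ref{a2=bcde})$. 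The identities (\ref{abc=cba}) and (\ref{aba=bab}) are precisely the ones named in statements~2 and~3 of that lemma, so they hold in $\mathbf{B}$ as well. For completeness, take an arbitrary non-trivial identity $u\approx v$ of $\mathbf{B}$. By Lemma~\ref{BIBSIB} there are three cases: $u\approx v$ has the form $abc\approx cba$, and then it is a substitution instance of (\ref{abc=cba}); or it has the form $aba\approx bab$, and then it is a substitution instance of (\ref{aba=bab}); or $u$ and $v$ are both non-$(4,2)$-trivial. In the first two cases $u\approx v$ is immediately deducible from the basis, so the work is entirely in the third case.

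For the third case I would establish the normal-form statement that \emph{the single identity} (\ref{a2=bcde}) \emph{deduces} $w\approx x_{1}x_{2}x_{3}x_{4}$ \emph{for every non-$(4,2)$-trivial word} $w$; granting this, $u$ and $v$ are both deducibly equal to $x_{1}x_{2}x_{3}x_{4}$, hence $u\approx v$ is deducible, and together with soundness this shows that $\mathbf{B}$ is defined by (\ref{a2=bcde})--(\ref{aba=bab}), as claimed. To prove the normal-form statement I would first record that a word is non-$(4,2)$-trivial exactly when it has length $\ge 4$ or it contains a subword $z^{2}$ for some letter $z$: the only square-free words of length at most $3$ are, up to renaming, $x$, $xy$, $xyx$, $xyz$, and to none of these can any step of the rewriting generated by $x_{1}x_{2}x_{3}x_{4}\approx x_{5}^{2}$ be applied, so they are isoterms of $V(x_{1}x_{2}x_{3}x_{4}\approx x_{5}^{2})$ and thus $(4,2)$-trivial. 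Then there are two reductions. If $l(w)\ge 4$, split $w$ into four non-empty blocks $w=w_{1}w_{2}w_{3}w_{4}$ and apply (\ref{a2=bcde}) with $b,c,d,e$ sent to $w_{1},w_{2},w_{3},w_{4}$ and $a$ to a fresh letter $t$; this gives $w\approx t^{2}$, and a second application of (\ref{a2=bcde}), now read from left to right, gives $t^{2}\approx x_{1}x_{2}x_{3}x_{4}$. If $l(w)\le 3$ but $w$ contains $z^{2}$, apply (\ref{a2=bcde}) to that occurrence (sending $a\mapsto z$ and $b,c,d,e$ to fresh letters) to obtain a word of length $\ge 4$, and finish by the first reduction.

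The step I expect to be the only delicate one is the collapsing move in the first reduction: one matches an arbitrarily long word against the four-letter right-hand side $bcde$ of (\ref{a2=bcde}) by letting one of the variables absorb a whole block consisting of several letters. This is a bona fide substitution into the identity, and it is exactly what makes (\ref{a2=bcde}) --- which superficially only concerns words of length at most $4$ --- strong enough to collapse words of unbounded length to a single letter-square. Everything else (the trichotomy from Lemma~\ref{BIBSIB}, the enumeration of the short square-free words, and the bookkeeping of blocks and fresh letters) is routine.
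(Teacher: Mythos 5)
Your proof is correct and follows the same route as the paper: deduce soundness from Lemma~\ref{BIBSIB} and then run the three-case completeness argument, with cases 2 and 3 being literal instances of \eqref{abc=cba} and \eqref{aba=bab}. The only difference is that you spell out the deduction in case 1 (the normal form $w\approx x_{1}x_{2}x_{3}x_{4}$ via block substitution into \eqref{a2=bcde}), which the paper asserts without detail; your version of that step is valid.
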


\begin{proof} Suppose that the identity $u\approx v$ holds in the variety $\mathbf{B}$. We can apply Lemma \ref{BIBSIB}. If the identity $u\approx v$ satisfies statement 1 then it follows from the identity $a^{2}\approx bcde$. If the identity $u\approx v$ satisfies statement 2 or 3 then it follows from the identity \eqref{abc=cba} or \eqref{aba=bab} respectively. Hence identities \eqref{a2=bcde},\eqref{abc=cba} and \eqref{aba=bab} constitute an identity basis of the variety $\mathbf{B}$.
\end{proof}

\subsection{Semigroups $C_{\lambda}$ and $C_{\rho}$}

The variety $\mathbf{C_{\lambda}}$ is finitely based because $C_{\lambda}$ consists of 5 elements, while each semigroup with $\leq5$ elements generates a finitley based variety by Lemma \ref{SMALLS}. Next we find a finite identity basis of the variety $\mathbf{C_{\lambda}}$ and show that it is polynomially recognizable.

By duality, we consider only the semigroup $C_{\lambda}$.

\begin{lemma}
\label{CIBSID}
Let $u$, $v$ be words. The following are equivalent:
\begin{arabiclist}
  \item The variety $\mathbf{C_{\lambda}}$ satisfies the identity $u\approx v$;
  \item The following are satisfied:
  \begin{romanlist}
        \item $C(u)=C(v)$,
        \item $Occ(h(u),u)=1$ if and only if $Occ(h(v),v)=1$,
        \item if $Occ(h(u),u)=1$ then $h(u)=h(v)$,
        \item $Occ(h_{2}(u),u)=1$ if and only if $Occ(h_{2}(v),v)=1$,
        \item if $Occ(h_{2}(u),u)=1$ then $h(u)=h(v)$ and $h_{2}(u)=h_{2}(v)$.
  \end{romanlist}
  \item The following are satisfied:
  \begin{romanlist}
      \item $C(u)=C(v)$.
  \item one of the following statement hold:
  \begin{alphlist}
  \item $h(u)=h(v)$, $h_{2}(u)=h_{2}(v)$ and $Occ(h(u),u)=Occ(h(v),v)=Occ(h_{2}(u),u)=Occ(h_{2}(v),u)=1$,
  \item $h(u)=h(v)$, $h_{2}(u)=h_{2}(v)$, $Occ(h_{2}(u),u)=Occ(h_{2}(v),v)=1$ and $Occ(h(u),u)$ $,Occ(h(v),v)>1$,
  \item $h(u)=h(v)$, $Occ(h(u),u)=Occ(h(v),v)=1$ and $Occ(h_{2}(u),u)=Occ(h_{2}(v),v)>1$,
  \item $Occ(h(u),u),Occ(h(v),v),Occ(h_{2}(u),u),Occ(h_{2}(v),v)>1$.
  \end{alphlist}
  \end{romanlist}
\end{arabiclist}
\end{lemma}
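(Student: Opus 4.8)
The plan is to prove first the substantive equivalence $(1)\Leftrightarrow(2)$, which uses the structure of the five-element semigroup $C_{\lambda}$, and then the purely combinatorial equivalence $(2)\Leftrightarrow(3)$. After disposing of the trivial case in which $u$ or $v$ has length $\leq 1$ (where all three conditions collapse to $u\equiv v$), we may assume $l(u),l(v)\geq 2$.

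First I would write out the Cayley table of $C_{\lambda}$ and read off a \emph{normal-form lemma}: for every word $w$ with $l(w)\geq 2$ and every substitution $\phi\colon\mathcal{X}\to C_{\lambda}$, the element $\phi(w)$ is determined by the three data (a) the set $\{\phi(x):x\in C(w)\}$, (b) $\phi(h(w))$ together with the information whether $Occ(h(w),w)=1$, and (c) $\phi(h_{2}(w))$ together with the information whether $Occ(h_{2}(w),w)=1$. One proves this by induction on $l(w)$, factoring $w=h(w)\,h_{2}(w)\,w'$ and using the defining relations of $C_{\lambda}$ to absorb the tail $w'$ into the first two factors and into the content set; that only the first two positions survive is precisely the left-sided manifestation of those relations.

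For $(2)\Rightarrow(1)$, clauses (i)--(v) say exactly that for every $\phi$ the three data of the normal-form lemma attached to $u$ agree with those attached to $v$ — clause (i) matches (a), clauses (ii),(iii) match (b), clauses (iv),(v) match (c) — whence $\phi(u)=\phi(v)$ and $\mathbf{C_{\lambda}}\vDash u\approx v$. For $(1)\Rightarrow(2)$, argue contrapositively: for each way in which one of the clauses can fail, exhibit a ``projection'' substitution $\phi$ with $\phi(u)\neq\phi(v)$. If $C(u)\neq C(v)$, send a letter of the symmetric difference to a square-zero element of $C_{\lambda}$ and every other letter to a suitable idempotent, so that the mere presence of the distinguished letter alters the value; if $h(u)\neq h(v)$ with both simple, separate $u$ and $v$ using that $C_{\lambda}$ remembers the head of a product; if the simpleness of the head or of the second letter differs on the two sides, separate using that $C_{\lambda}$ distinguishes $x$ from $x^{2}$; the remaining clauses are handled similarly, always choosing the images of the unconstrained letters so as not to collapse the separating value. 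Checking that every one of these finitely many failure patterns is genuinely witnessed — in particular that the content-routing does not conflict with the images already forced on $h(u)$ and $h_{2}(u)$ — is the step I expect to be the \textbf{main obstacle}.

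Finally $(2)\Leftrightarrow(3)$ uses no semigroup theory. By clauses (ii) and (iv) the truth of each of $Occ(h(u),u)=1$ and $Occ(h_{2}(u),u)=1$ transfers between $u$ and $v$, so one splits into four cases according to these two truth values, and these cases are exactly the alternatives (3a)--(3d): head and second letter both simple gives (3a); head non-simple and second letter simple gives (3b); head simple and second letter non-simple gives (3c); both non-simple gives (3d). In each case clauses (iii) and (v) are either vacuous (when the head, respectively the second letter, is non-simple) or give, respectively, $h(u)=h(v)$, and $h(u)=h(v)$ together with $h_{2}(u)=h_{2}(v)$; thus (3d) retains only $C(u)=C(v)$, (3b) adds $h(u)=h(v)$ and $h_{2}(u)=h_{2}(v)$ via (v), (3c) adds only $h(u)=h(v)$ via (iii), and (3a) obtains $h(u)=h(v)$ and $h_{2}(u)=h_{2}(v)$ from both. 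Matching these against the displayed equalities in (3) — and reading $Occ(h_{2}(v),v)$ for the evidently mistyped $Occ(h_{2}(v),u)$ in (3a) — completes the proof.
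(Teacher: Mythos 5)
Your reduction of $(2)\Leftrightarrow(3)$ to a four-way case split on the simplicity of $h(u)$ and $h_{2}(u)$ is fine and is exactly what the paper dismisses as ``verified easily.'' The problems are in the other two implications. For $(2)\Rightarrow(1)$, your normal-form lemma does not do the work you assign to it. You claim that clauses (i)--(v) force the three data attached to $u$ and to $v$ to agree, but they do not: when $Occ(h(u),u)>1$ clauses (ii)--(iii) impose no relation between $h(u)$ and $h(v)$ (compare alternative (d) of statement 3, where $u$ and $v$ may have entirely different heads and second letters), so $\phi(h(u))$ and $\phi(h(v))$ need not coincide and your datum (b) differs on the two sides. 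What actually has to be proven --- and is the real content of the sufficiency direction --- is that when the head (respectively the second letter) is non-simple, the value $\phi(w)$ does not depend on \emph{which} letter occupies that position. This is precisely what the paper establishes by a direct case analysis on $\phi(h(u))$ and $\phi(h_{2}(u))$, using the annihilation facts $C_{\lambda}xy=\{0\}$ and $C_{\lambda}C_{\lambda}y=\{0\}$ to show that every substitution not of a few controlled shapes sends both sides to $0$. As a side remark, the lemma is also false as literally stated, because datum (a) is only the image \emph{set}: for $w=abc$ the substitutions $a,b,c\mapsto y,x,x$ and $a,b,c\mapsto y,x,y$ have identical data $(\{x,y\},(y,\mathrm{simple}),(x,\mathrm{simple}))$ but values $y$ and $0$. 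That particular defect is harmless in the application (the same $\phi$ is applied to both $u$ and $v$ and $C(u)=C(v)$), but it signals that the lemma needs to be restated in terms of $\phi|_{C(w)}$ and then it still does not suffice, for the reason above.

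For $(1)\Rightarrow(2)$ you have given only a plan: you list the failure patterns and then explicitly defer ``checking that every one of these finitely many failure patterns is genuinely witnessed'' as the expected main obstacle. That check \emph{is} the necessity proof; until the separating substitutions are written down and verified not to collapse, nothing has been established. The paper resolves this economically with just two substitutions: $\phi_{1}$ sending $h(u)$ to $xy$ and every other letter to $x^{2}$ (which detects whether the head is simple and what it is), and $\phi_{2}$ sending $h_{2}(u)$ to $y$ and every other letter to $x$ (which does the same for the second letter); clause (i) comes for free from $Y_{2}\in\mathbf{C_{\lambda}}$. You should supply these (or equivalent) witnesses explicitly and verify the resulting values in $C_{\lambda}$ before the argument can be considered complete.
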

\begin{proof} The equivalence of the statements 2 and 3 is verified easily.

$1\Longrightarrow2$. Assume that an identity $u\approx v$ holds in the variety $\mathbf{C_{\lambda}}$. It easy to see that $Y_{2}\in\mathbf{C_{\lambda}}$. Therefore the statement (i) holds by Lemma \ref{MSB}. Suppose that $Occ(h(u),u)=1$ but $Occ(h(v),v)>1$. Suppose that $h(u)\neq h(v)$. Consider a homomorphism such that
$$\phi_{1}(a)=\begin{cases}
xy, &\text{if }  a=h(u);\\
x^{2}, &\text{otherwise}
\end{cases}$$

Then $\phi_{1}(u)=xyx^{2k}=xy$ and $\phi_{1}(v)=0$. We obtain that $\phi_{1}(u)=xy\neq0=\phi_{1}(v)$ in contradiction with assumption that $\mathbf{C_{\lambda}}\vDash u\approx v$.

Let now $h(u)=h(v)$. Then $\phi_{1}(u)=xyx^{2k}=xy$ and $\phi_{1}(v)=0$ . But $\phi_{1}(u)=xy\neq0=\phi_{1}(v)$ in contradiction with assumption that $\mathbf{C_{\lambda}}\vDash u\approx v$.

We show that $Occ(h(u),u)=1$ if and only if $Occ(h(v),v)=1$. Let us observe that if $Occ(h(u),u)=1$ and $h(u)\neq h(v)$ then $\phi_{1}(u)=xy\neq0=\phi(v)$. Hence (iii) holds.

Proof of the conditions \emph{(ii)}, \emph{(iii)} is similar to \emph{(iv)}, \emph{(v)}. Unless we should consider another map. To prove \emph{(iv)}, \emph{(v)} it is sufficiently to consider the following homomorphism
$$ \phi_{2}(a)=\begin{cases}
y, &\text{if }  a=h_{2}(u);\\
x, &\text{otherwise}
\end{cases}
$$
$2\Longrightarrow1$. Let an identity $u\approx v$ satisfies conditions \emph{(i)}-\emph{(v)}. Let $\phi$ be the map from $\mathcal{X^{+}}$ to $C_{\lambda}$. Since $Y_{2}\in \mathbf{C_{\lambda}}$ so we can assume that $\phi(a)\neq0$ for all $a\in\mathcal{X}$. The identity $u\approx v$ is not trivial. Thus $l(u),l(v)\geq3$ follows from conditions \emph{(ii)}, \emph{(iii)}, \emph{(iv)}, \emph{(v)}. It is easy to see that $C_{\lambda}xy=\{0\}$ and $C_{\lambda}C_{\lambda}y=\{0\}$. Therefore if $\phi(a)=xy$ (respectively, $\phi(a)=y$) for the letter $a\in\mathcal{X}$ such that $a\neq h(u)$ (respectively, $a\neq h(u),h_{2}(u)$) then we have $\phi(u)=\phi(v)=0$.

Suppose that $\phi(h(u))\in\{y, xy\}$. If there exists a letter $b\in C(u)\backslash\{h(u)\}$ such that $\phi(b)\notin \{x, x^{2}\}$ then $\phi(u)=\phi(v)=0$. Otherwise, conditions \emph{(ii)}, \emph{(iii)} imply that $\phi(u)=\phi(v)=\phi(h(u))$.

Let now $\phi(h(u))\in\{x, x^{2}\}$. If $\phi(h_{2}(u))=y$. Then $Occ(h_{2},u)=Occ(h_{2}(v),v)=1$ and $h_{2}(u)=h_{2}(v), h(u)=h(v)$. If $\phi(h(u))=x^{2}$ or there exist a letter $b\in C(u)\backslash\{h(u),h_{2}(u)\}$ such that $\phi(b)\notin\{x, x^{2}\}$ then $\phi(u)=\phi(v)=0$. Otherwise conditions \emph{(iii)},\emph{(iv)} imply that $\phi(u)=\phi(v)=xy$.

It remains to consider the case when $\phi(h_{2}(u))\neq y$. Then $\phi(h_{2}(u))\in\{x,x^{2}\}$. Consider a letter $b\in C(u)\backslash\{h(u),h_{2}(u)\}$. Note that we discussed cases $\phi(b)\in\{xy,y,0\}$. So $\phi(b)\in\{x,x^{2}\}$ and we have $\phi(u)=\phi(v)=x^{2}$.

We obtain $\phi(u)=\phi(v)$ in all cases. Hence the identity $u\approx v$ holds in the variety $\mathbf{C_{\lambda}}$.
\end{proof}

Now, we are ready to find an identity basis of the variety $\mathbf{C_{\lambda}}$.

\begin{lemma}\label{CIBSB} The identities
\begin{align}
\label{a2=a3}
& a^{2}\approx a^{3}\\
\label{a2b=b2a}
& a^{2}b\approx b^{2}a\\
\label{abc=abc2}
& abc\approx abc^{2}
\end{align}
form a identity basis of the variety $\mathbf{C_{\lambda}}$.
\end{lemma}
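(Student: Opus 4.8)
The plan is to prove $V(\Sigma)=\mathbf{C_{\lambda}}$, where $\Sigma$ is the set consisting of \eqref{a2=a3}, \eqref{a2b=b2a} and \eqref{abc=abc2}. The inclusion $\mathbf{C_{\lambda}}\subseteq V(\Sigma)$ only requires that each identity of $\Sigma$ holds in $\mathbf{C_{\lambda}}$, and this is immediate from Lemma~\ref{CIBSID}: each of the three identities satisfies condition~2 there. Indeed, for \eqref{a2=a3} and \eqref{a2b=b2a} the two sides have the same content and the head and the second letter are non-simple on both sides, so (ii)--(v) hold vacuously; for \eqref{abc=abc2} the two sides have content $\{a,b,c\}$, head $a$ and second letter $b$, with $a$ and $b$ simple on both sides, so all of (i)--(v) hold. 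Thus $\mathbf{C_{\lambda}}\vDash\Sigma$.

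For the reverse inclusion I would argue that every identity true in $\mathbf{C_{\lambda}}$ is a consequence of $\Sigma$, by the usual normal-form method. Lemma~\ref{CIBSID} tells us that the only data of a word $w$ that $\mathbf{C_{\lambda}}$ can detect is: the content $C(w)$; whether $h(w)$ is simple (and if so which letter); and whether $h_{2}(w)$ is simple (and if so which letter) --- a one-letter word being detected only through that letter. So I would attach to each word $w$ a normal form $\widehat{w}$ retaining exactly this data and filling in everything else by a fixed canonical word of the correct content; concretely, for $l(w)\ge 2$ one may take $\widehat{w}=h(w)\,h_{2}(w)\,c_{1}c_{2}\cdots c_{k}$ if $h(w),h_{2}(w)$ are both simple, $\widehat{w}=h(w)\,h_{2}(w)\,h(w)\,c_{1}\cdots c_{k}$ if only $h_{2}(w)$ is simple, $\widehat{w}=h(w)\,c_{1}^{2}c_{2}\cdots c_{k}$ if only $h(w)$ is simple, and $\widehat{w}=c_{0}^{2}c_{1}\cdots c_{k}$ otherwise, where $c_{0}<c_{1}<\cdots$ list the remaining letters in a fixed order, with $\widehat{w}=w$ for one-letter $w$. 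By Lemma~\ref{CIBSID}, if $\mathbf{C_{\lambda}}\vDash u\approx v$ then $u$ and $v$ carry the same data (in particular, when only $h(u)$ is simple the lemma does not pin $h_{2}$, which is exactly why $\widehat{w}$ does not depend on it in that case), so $\widehat{u}=\widehat{v}$ as words; and $\widehat{w}$ lies in the same one of the four cases as $w$. Hence the whole reverse inclusion reduces to the single claim $\Sigma\vdash w\approx\widehat{w}$ for every word $w$.

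I would establish $\Sigma\vdash w\approx\widehat{w}$ in three stages. First, \eqref{a2=a3} --- equivalently $c^{n}\approx c^{2}$ for $n\ge 2$, which by substitution applies to any factor --- collapses every maximal power of $w$ to exponent at most $2$. Second, normalise the part of $w$ beyond its first two letters: substituting words into \eqref{abc=abc2} gives $u'z\approx u'z^{2}$, hence $u'z\approx u'z^{n}$, for arbitrary words $u',z$ with $l(u')\ge 2$; iterating this manufactures long factors $z^{n}$ inside which \eqref{a2=a3} and word-substitution instances of \eqref{a2b=b2a} (such as $(ab)^{2}b\approx b^{2}(ab)$, or $b^{2}c^{2}\approx c^{4}b\approx c^{2}b$) rearrange letters, the surplus then being reabsorbed by reading \eqref{abc=abc2} backwards; one checks that this permits the tail to be replaced by any word of the same content. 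Third, further instances of \eqref{a2b=b2a}, together with the consequence $a^{2}b\approx a^{2}b^{2}$ of \eqref{abc=abc2}, reconcile the boundary between the pinned first two letters and the tail and realise the passage between the four shapes. A routine case analysis over the four cases then gives $\Sigma\vdash w\approx\widehat{w}$, and the lemma follows.

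The hard part is the second and third stages. On their own, \eqref{abc=abc2} never changes the first two letters of a word and only duplicates a suffix, while \eqref{a2b=b2a} merely exchanges one doubled-letter block for another, so it is not obvious that together they can permute an arbitrary suffix up to content or move between the four shapes of Lemma~\ref{CIBSID}. The resolution is the kind of combination exhibited by
\[
abab\approx ababb\approx bbab\approx aabb\approx aab,
\]
where one first creates a square that was not present (via \eqref{abc=abc2}), then relocates it by a compound substitution into \eqref{a2b=b2a}, and finally deletes the surplus by reading \eqref{abc=abc2} backwards; checking that manipulations of this form suffice in all cases is the only non-formal part of the argument.
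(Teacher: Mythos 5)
Your proposal is correct, and in substance it rests on the same pillars as the paper's proof --- Lemma~\ref{CIBSID} as the description of what $\mathbf{C_{\lambda}}$ detects, plus the observation that \eqref{a2b=b2a} and \eqref{abc=abc2} together let one create a square, relocate it, and reabsorb the surplus --- but it is organized differently. The paper argues directly on the pair $(u,v)$, splitting into three cases according to whether $h(u)=h(v)$ and $h_{2}(u)=h_{2}(v)$, and it first isolates the two workhorse consequences $abcd\approx abdc$ and $a^{2}b^{2}\approx b^{2}a^{2}$ (each derived by exactly the create--relocate--delete pattern you exhibit in $abab\approx ababb\approx bbab\approx aabb\approx aab$); each case is then finished by lifting a known basis of $\mathbf{Y_{2}}$ through the two-letter prefix. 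You instead attach a canonical form $\widehat{w}$ to each word and reduce everything to the single claim $\Sigma\vdash w\approx\widehat{w}$; this is a legitimate variant, and your four shapes correctly reflect that in your case (c) the second letter is not pinned by Lemma~\ref{CIBSID}(3)(c). What your write-up defers as ``the only non-formal part'' is precisely what the paper's two derived identities make routine: $abcd\approx abdc$ gives free permutation of everything past position~2, $a^{2}b^{2}\approx b^{2}a^{2}$ applied with a one-letter (resp.\ empty) left context changes the letter in position~2 (resp.\ position~1) after squaring, and \eqref{abc=abc2} with \eqref{a2=a3} adjusts multiplicities. I would recommend stating those two consequences explicitly and proving the two-line derivations, after which your ``routine case analysis'' genuinely is routine; as written, your proof is at roughly the same level of detail as the paper's, and no step of it fails.
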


\begin{proof}
Note that the identity
\begin{equation}
\label{abcd=abdc}
abcd\approx abdc
\end{equation}
follows from identities \eqref{a2b=b2a}, \eqref{abc=abc2}. Indeed,
$$abcd\approx^{\eqref{abc=abc2}}abc^{2}d\approx^{\eqref{a2b=b2a}}abd^{2}c\approx^{\eqref{abc=abc2}}abdc$$

Also the identity
\begin{equation}
\label{a2b2=b2a2}
a^{2}b^{2}\approx b^{2}a^{2}
\end{equation}
follows from identities \eqref{a2b=b2a}, \eqref{abc=abc2}:

$$a^{2}b^{2}\approx^{\eqref{abc=abc2}}a^{2}b\approx^{\eqref{a2b=b2a}}b^{2}a\approx^{\eqref{abc=abc2}}b^{2}a^{2}$$

Consider an identity $u\approx v$ such that $\mathbf{C_{\lambda}}\vDash u\approx v$. We can apply Lemma \ref{CIBSID}. Consider all possible casses:

\begin{arabiclist}
  \item $h(u)=h(v)$, $h_{2}(u)=h_{2}(v)$. Then the identity $u\approx v$ has the form $abu'\approx abv'$ such that $\mathbf{Y_{2}}\vDash u'\approx v'$. In this case the identity $u'\approx v'$ follows from identities $a\approx a^{2},ab\approx ba$. Hence, the identity $abu'\approx abv'$ follows from identities \eqref{abc=abc2},\eqref{abcd=abdc}.
    \item $h(u)=h(v)$, $h_{2}(u)\neq h_{2}(v)$. Then the identity $u\approx v$ has the form $abu'bu''\approx acv'cv''$. Applying identities \eqref{abc=abc2}, \eqref{abcd=abdc} we find that the identity $u\approx v$ is equivalent to $ab^{2}c^{2}u'''\approx ac^{2}b^{2}v'''$ where $\mathbf{Y_{2}}\vDash u'''\approx v'''$. Hence, the identity $ab^{2}c^{2}u'''\approx ac^{2}b^{2}v'''$ follows from identities \eqref{abc=abc2}, \eqref{abcd=abdc}, \eqref{a2b2=b2a2}.
    \item $h(u)\neq h(v)$. Then applying identities \eqref{a2=a3}, \eqref{a2b=b2a}, \eqref{abc=abc2}, \eqref{a2b2=b2a2} we can find that the identity $u\approx v$ is equivalent to $x_{1}^{2}...x_{k}^{2}\approx x_{\pi(1)}^{2}...x_{\pi(k)}^{2}$ where $\pi\in S_{k}$. But this identity follows from \eqref{a2b2=b2a2}. Therefore, the identity $u\approx v$ follows from identities \eqref{a2=a3}, \eqref{a2b=b2a}, \eqref{abc=abc2}.
\end{arabiclist}

We have proved that an identity $u\approx v$ follows from identities \eqref{a2=a3},\eqref{a2b=b2a} and \eqref{abc=abc2} whenever it holds in the variety $\mathbf{C_{\lambda}}$. This evidently implies the desirable conclusion.
\end{proof}

\subsection{Semigroups $K_{n}$ and $D$}

Let $V$ a variety defined by a given finite system of identities. To check that the variety $V$ is a Rees-Suchkevich variety we have to verify that $K_{n}\notin V$. But we have to do it just for such $n$ is dividing a period of $V$. Hence we verify it for $n=1$. It is not difficult to see that $\mathbf{D}\subseteq \mathbf{K_{1}}$ because the semigroup $D$ is a quotient of the semigroup $K_{1}$. But it will be easy to see that in fact $\mathbf{D}=\mathbf{K_{1}}$. Whence we can assume that the semigroup $D$ is unnecessary in the list of indicator Rees---Sushkevich semigroups.

Let the word $u$ have the following form $u=aWb$. Denote by $\xi_{n}$ the map from $\mathcal{X^{+}}$ to $\mathbb{Z}$ defined by the rule

$$ \xi_{n}(u)=\begin{cases}
0, &\text{if }  l(u)=2;\\
0, &\text{if either }  a\in C(W) \text{ or } b\in C(W);\\
\gcd\limits_{x_{i}\in C(W)}\{Occ(x_{i},W)\}, &\text{otherwise}.
\end{cases}
$$

\begin{lemma}
\label{KIBSID}
The variety $\mathbf{K_{n}}$ satisfies a non-trivial identity $u\approx v$ if and only if the following conditions are satisfied:

\begin{arabiclist}
  \item $C(u)=C(v)$
  \item $\mathbf{C_{2,n}}\vDash u\approx v$
  \item one of the following conditions are satisfied:
  \begin{alphlist}
    \item $h(u)=h(v), Occ(h(u),u)=Occ(h(v),v)=1, Occ(t(u),u),Occ(t(v),v)>1$
    \item $t(u)=t(v), Occ(t(u),u)=Occ(t(v),v)=1, Occ(h(u),u),Occ(h(v),v)>1$
    \item $h(u)=h(v),t(u)=t(v), Occ(h(u),u)=Occ(h(v),v)=Occ(t(u),u)=Occ(t(v),v)=1$
    \item $h(u)=h(v)=t(u)=t(v), Occ(h(u),u)=Occ(h(v),v)=2, \xi_{n}(u)=\xi_{n}(v)=1$
    \item $Occ(h(u),u),Occ(h(v),v),Occ(t(u),u),Occ(t(v),v)>1, \xi_{n}(u),\xi_{n}(v)\neq1$
  \end{alphlist}
\end{arabiclist}
\end{lemma}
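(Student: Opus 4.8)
The plan is to establish the two implications by analyzing the possible homomorphisms $\phi:\mathcal{X}^{+}\to K_{n}$, exactly as was done for $\mathbf{C_{\lambda}}$ in Lemma \ref{CIBSID}, but now keeping track of both ends of a word (since $K_{n}$ has the defining relations $xy^{q}x=0$ and $xyx=xy^{n+1}x$, the relevant data are the head, the tail, and the "winding number" of the interior, captured by $\xi_{n}$). First I would record the structure of $K_{n}$: list its elements, note that $K_{n}x K_{n}=\{0\}$ unless one reads off a word of the shape $h\,W\,t$ with $h,t$ distinct from every letter of $W$, and compute the products $y^{i}$, $xy^{i}$, $y^{i}x$, $xy^{i}x$. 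From the relation $x y^{q}x=0$ for $q=2,\dots,n$ together with $xyx=xy^{n+1}x$ one sees that the only nonzero values of $xy^{i}x$ occur when $i\equiv 1\pmod n$ (with $i\geq 1$), which is precisely the reason the $\gcd$ in the definition of $\xi_{n}$ is compared to $1$ modulo $n$; I would also note that $\langle y\rangle\cong C_{2,n}$ inside $K_{n}$, which forces condition 2, and that $Y_{2}\in\mathbf{K_{n}}$ (or at least $C_{2,n}$ contains a semilattice section), forcing condition 1.

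For the necessity direction I would assume $\mathbf{K_{n}}\vDash u\approx v$ and derive conditions 1--3 by exhibiting separating homomorphisms whenever one of them fails, in the spirit of the maps $\phi_{1},\phi_{2}$ used for $\mathbf{C_{\lambda}}$. Concretely: $C_{2,n}\in\mathbf{K_{n}}$ gives conditions 1 and 2 directly. To force the head/tail data of condition 3, send the head (or tail) of $u$ to $x$ and every other letter to a suitable power of $y$, so that $\phi(u)$ becomes one of $xy^{i}$, $y^{i}x$, $xy^{i}x$, or $y^{i}$ according to which sub-case of (a)--(e) is in play; the case analysis on whether $Occ(h(u),u)$ and $Occ(t(u),u)$ equal $1$ or exceed $1$ produces exactly the five alternatives (a)--(e), and within alternative (d) the map $x\mapsto x$, (interior letters)$\mapsto y$ makes $\phi(u)=xy^{\,\xi'(u)}x$ for an appropriate multiple of $\xi_{n}(u)$, so that $\phi(u)\neq\phi(v)$ unless $\xi_{n}(u)=\xi_{n}(v)=1$. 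For the sufficiency direction I would take an arbitrary $\phi:\mathcal{X}^{+}\to K_{n}$ with, WLOG, $\phi(a)\neq 0$ for every letter (justified because $Y_{2}$, or rather the zero-absorbing section, already lets us dispose of any letter sent to $0$), and check $\phi(u)=\phi(v)$ by cases on the values $\phi(h(u))$, $\phi(t(u))$ and $\phi$ of the interior letters; in each case conditions 1--3 pin down the product on both sides to the same element of $K_{n}$ (one of $0$, a power of $y$, $xy^{i}$, $y^{i}x$, or $xy^{i}x$), the last being where condition (d) and $\xi_{n}$ do their work.

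The main obstacle I expect is the bookkeeping in sub-case (d) and in the interplay between conditions 2 and 3(d)--(e): one must verify that the two relations $xy^{q}x=0$ $(2\le q\le n)$ and $xyx=xy^{n+1}x$ really do collapse "the product $xy^{i}x$ is nonzero" to the single congruence condition "$i\equiv 1 \pmod n$", and then that $\xi_{n}(u)=\xi_{n}(v)=1$ is the correct combinatorial invariant — i.e. that under the map collapsing all interior letters to $y$ the exponent one obtains is $\xi_{n}(u)$ times the common total interior length divided by $\xi_{n}(u)$, so that being $\equiv 1\pmod n$ on all such specializations is equivalent to $\xi_{n}(u)\equiv 1$. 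Dually one must make sure that in alternative (e), where $\xi_{n}(u),\xi_{n}(v)\neq 1$, every specialization of the interior to powers of $y$ yields $xy^{i}x=0$ on both sides (using that $\gcd$ not $\equiv 1\pmod n$ forces some non-admissible exponent, possibly after further splitting), so the two sides agree at $0$. Once these combinatorial facts are isolated, the remaining verifications are routine finite checks of products in $K_{n}$, entirely parallel to Lemma \ref{CIBSID}.
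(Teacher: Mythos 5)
Your overall strategy coincides with the paper's: necessity is obtained by specializing the head and tail to $x$ and the remaining letters into the copy of $C_{2,n}$ generated by $y$, and sufficiency by partitioning $K_{n}$ into the classes $\{x\}$, $\{y^{k}\}$, $\{xy^{k}\}$, $\{y^{k}x\}$, $\{xyx\}$, $\{0\}$ and checking that conditions 1--3 force $\phi(u)$ and $\phi(v)$ into the same class with the same value. So the route is the same; the problem is the step you yourself single out as ``the main obstacle,'' which you defer rather than resolve, and which cannot in fact be resolved in the form you (and the paper) state it.

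Concretely: you correctly observe that $xy^{i}x\neq 0$ exactly when $i\equiv 1\pmod n$. But if $u=aWa$ and each interior letter $x_{i}$, of multiplicity $m_{i}$ in $W$, is sent to $y^{c_{i}}$ with $c_{i}\geq 1$, the achievable interior exponents $\sum c_{i}m_{i}$ are precisely the sufficiently large multiples of $d=\gcd_{i}\{m_{i}\}=\xi_{n}(u)$, and a multiple of $d$ can be $\equiv 1\pmod n$ if and only if $\gcd(d,n)=1$ --- not if and only if $d=1$. Hence the claim needed for alternative (e), that $\xi_{n}(u)\neq 1$ forces $\phi(u)\neq xyx$ for every $\phi$, is false: for $n=2$ and $u=ab^{3}a$ one has $\xi_{2}(u)=3\neq 1$, yet $a\mapsto x$, $b\mapsto y$ gives $xy^{3}x=xy^{n+1}x=xyx\neq 0$. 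Taking $v=b^{3}a^{2}$ then satisfies conditions 1, 2 and 3(e) while $\phi(v)=y^{3}x^{2}=0$, so the biconditional as stated fails at exactly this point. The correct invariant is $\gcd(\xi_{n}(\cdot),n)$, which is consistent with the restriction $\gcd(m,n)>1$ in identity \eqref{abma=an+1bma} of Lemma \ref{KIBSFB}. The paper's own proof has the same lacuna (its case 4(ii) asserts ``for all maps we have $\phi(u)\neq xyx$'' with no argument), so your plan reproduces rather than repairs it; to complete a proof you must replace $\xi_{n}(u)=1$ by $\gcd(\xi_{n}(u),n)=1$ throughout (d) and (e) and then actually carry out the gcd computation above in both directions.
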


\begin{proof}\textbf{Necessity.} Assume that an identity $u\approx v$ holds in the variety $\mathbf{K_{n}}$. It is easy to see that $Y_{2}\in \mathbf{K_{n}}$. Therefore $C(u)=C(v)$ by Lemma \ref{MSB}. The semigroup generated by the element $y$ is isomorphic to the semigroup $C_{2,n}$. So the identity $u\approx v$ holds in the variety $\mathbf{C_{2,n}}$. To prove \emph{3} consider possible cases:

\begin{arabiclist}
  \item $Occ(h(u),u)=1,Occ(t(u),u)>1$. Consider the following map
    $$\phi(a)=\begin{cases}
    x, &\text{if }  a=h(u);\\
    y^{2n}, &\text{otherwise}
    \end{cases}
    $$

We obtain $\phi(u)=xy^{2n}$. But the identity $u\approx v$ holds in the variety $\mathbf{K_{n}}$, and thus $\phi(v)=xy^{2n}$. It follows that $Occ(h(v),v)=1$ and $h(u)=h(v)$.

\item $Occ(t(u),u)=1,Occ(h(u),u)>1$. Consider the following map
$$\phi(a)=\begin{cases}
x, &\text{if }  a=t(u);\\
y^{2n}, &\text{otherwise}
\end{cases}
$$
As in the previous case we find that $Occ(t(v),v)=1$ and $t(u)=t(v)$.

Comparing cases \emph{1} and \emph{2} we see that $Occ(t(v),v)>1$ and Condition \emph{(a)} holds in the first case, while $Occ(h(v),v)>1$ and Condition \emph{(b)} holds in the second case.

\item $Occ(h(u),u)=Occ(t(u),u)=1$. Note that $N_{2}\in \mathbf{C_{2,n}}\subseteq \mathbf{K_{n}}$. So $l(u),l(v)>1$ because the identity $u\approx v$ is not trivial. Applying the same arguments as in previous cases we find that $Occ(h(v),v)=Occ(t(v),v)=1$ and $h(u)=h(v),t(u)=t(v)$. Hence Statement \emph{(c)} holds.

\item $h(u)=t(u), Occ(h(u),u)=2$. If $l(u)=2$ then we have $\xi_{n}(u)=0$. It follows from the previous cases that the word $ab$ is isoterm. Therefore the word $v$ has the form $v=a^{2+kn}$. Hence Condition \emph{(e)} holds. Now assume that the word $u$ has the form $u=aWa$, where the word $W$ is non-simple. Consider possible cases:

    \begin{romanlist}
      \item $\xi_{n}(u)=1$. Then there exists a map $\phi':\mathcal{X^{+}}\rightarrow C_{2,n}\leq K_{n}$ such that $\phi'(W)=y^{2n+1}$. Consider the following map:
    $$\phi(a)=\begin{cases}
    a, &\text{if }  a\in C(W);\\
    x, &\text{otherwise}
    \end{cases}
    $$
    Then we have $\phi(u)=xy^{n+1}x=xyx$ and $\phi(v)=xyx$. But in this case we can conclude that the word $v$ has the form $v=aW'a$, where $\xi_{n}(v)=1$. Thus Condition \emph{(d)} holds.
    \item $\xi_{n}(u)\neq1$. Then for all maps we have $\phi(u)\neq xyx$. Thus we have $\xi_{n}(v)\neq1$ and $Occ(h(v),v),Occ(t(v),v)>1$. Therefore Condition \emph{(e)} holds.
    \end{romanlist}
    \item $Occ(h(u),u),Occ(t(u),u)>1$ and $\xi_{n}(u)\neq1$. Now, we can apply all previous arguments to conclude that $Occ(h(v),v),Occ(t(v),v)>1$ and $\xi_{v}(u)\neq1$. Thus Condition \emph{(e)} holds.
\end{arabiclist}

\textbf{Sufficiency.} Consider an identity $u\approx v$ that satisfies conditions \emph{1},\emph{2} and \emph{3}. Note that the semigroup $K_{n}=\cup_{i=1}^{6}F_{i}$, where

$F_{1}=\{x\}$

$F_{2}=\{y^{k} \mid 1\leq k \leq n+1\}$

$F_{3}=\{xy^{k} \mid 1\leq k \leq n+1\}$

$F_{4}=\{y^{k}x \mid 1\leq k \leq n+1\}$

$F_{5}=\{xyx\}$

$F_{6}=\{0\}$
and $F_{i}\cap F_{j}=\emptyset$ if and only if $i\neq j$.

Consider a map $\phi: \mathcal{X^{+}}\rightarrow K_{n}$. Condition \emph{(3)} implies  that $\phi(u),\phi(v)\in F_{i}$ for some $i$.  In turn, Conditions \emph{(1)},\emph{(2)} imply that values of words $u$ and $v$ coincide in $F_{i}$. Thus, the identity $u\approx v$ holds in the variety $\mathbf{K_{n}}$.\end{proof}

Now, we can prove that the variety $\mathbf{K_{n}}$ is finitely based.

\begin{lemma}\label{KIBSFB} The variety $\mathbf{K_{n}}$ is given by the identity \eqref{a2b2=b2a2} and the following identities:

\begin{align}
\label{a2=an}
& a^{2}\approx a^{n+2},\\
\label{abcd=acbd}
& abcd\approx acbd,\\
\label{abc=abn+1c}
& abc\approx ab^{n+1}c,\\
\label{abma=an+1bma}
& ab^{m}a\approx a^{n+1}b^{m}a, \text{ where } \gcd(m,n)>1 \text{ and } m\leq n.
\end{align}
\end{lemma}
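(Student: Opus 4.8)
The plan is to verify the two inclusions between $\mathbf{K_n}$ and the variety defined by the five displayed identities. The inclusion in which $\mathbf{K_n}$ contains that variety (\emph{soundness}) is a routine application of Lemma~\ref{KIBSID}: one checks that each of \eqref{a2b2=b2a2}, \eqref{a2=an}, \eqref{abcd=acbd}, \eqref{abc=abn+1c} and \eqref{abma=an+1bma} satisfies conditions 1, 2 and 3 of that lemma. Condition 1 is clear; condition 2 reduces in each case to a congruence modulo $n$ on letter multiplicities; and for condition 3 one reads off the relevant alternative --- \eqref{abcd=acbd} and \eqref{abc=abn+1c} fall under 3(c), while \eqref{a2=an}, \eqref{a2b2=b2a2} and \eqref{abma=an+1bma} fall under 3(e), where for the last identity the hypothesis $\gcd(m,n)>1$ is precisely what forces $\xi_n\neq1$. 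So the substance of the lemma is the reverse inclusion: every identity $u\approx v$ with $\mathbf{K_n}\vDash u\approx v$ is derivable from the five identities.

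For the reverse inclusion I would assign to every word a \emph{canonical form} and show, using only the five identities, that every word is derivably equal to its canonical form, and that two words satisfying the conditions of Lemma~\ref{KIBSID} receive the same canonical form; deducibility of $u\approx v$ then follows. The auxiliary derivations split into three layers. First, rearrangement of the interior: substituting \eqref{abcd=acbd} into a window $x_{i-1}x_ix_{i+1}x_{i+2}$ of a word of length $\geq4$ shows that the letters of a word lying strictly between its first and last letters may be permuted arbitrarily, and \eqref{abc=abn+1c} lets a single occurrence of a letter that is neither the head nor the tail be inflated to $n+1$ consecutive copies, and conversely; together these collect all interior occurrences of a letter into one block and reduce its exponent modulo $n$ into $\{1,\dots,n\}$. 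Second, freeing the ends: \eqref{a2=an} reduces any genuine power $a^e$ with $e\geq2$ modulo $n$ into $\{2,\dots,n+1\}$ wherever it occurs, including at the ends, and \eqref{a2b2=b2a2} (together with the first layer) lets two adjacent square-or-higher blocks --- in particular a leading or trailing one --- exchange places, so that a head or tail letter ceases to be frozen once it occurs at least twice. Third, the boundary identity \eqref{abma=an+1bma}, which raises the multiplicity of a doubled end letter over an interior block $b^m$ exactly when $\gcd(m,n)>1$.

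It then remains to run through the five alternatives 3(a)--3(e) of Lemma~\ref{KIBSID} and to exhibit in each a canonical representative reachable by the three layers above. In 3(a) and 3(b) the simple head (respectively tail) stays fixed and everything else is gathered, sorted and normalized; in 3(c) both ends are simple and pinned while the middle is a sorted commutative product; in 3(e) every letter occurs at least twice --- or may be inflated until it does --- so the second layer flattens the word to a single sorted commutative product $x_1^{f_1}\cdots x_k^{f_k}$ with canonical exponents; and in 3(d), where the word has the shape $aWa$ with $a$ occurring exactly twice, the canonical form keeps those two occurrences and normalizes $W$. A check that two words meeting the same alternative together with conditions 1 and 2 produce literally the same representative finishes the argument. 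I expect the main obstacle to be alternative 3(d) and its interface with 3(e): there \eqref{abma=an+1bma} is not applicable, so the canonical exponents inside $W$ must be chosen so as to preserve the behaviour of the invariant $\xi_n$, and one must verify both that the five identities suffice to reach this form and that they never derive an identity lying outside the characterization of Lemma~\ref{KIBSID}. Keeping precise track of $\xi_n$ and of the immovable head and tail letters throughout the rewriting is where the proof will need the most care.
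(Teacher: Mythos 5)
Your proposal is correct and follows essentially the same route as the paper: both rely on the characterization in Lemma~\ref{KIBSID} and then run a case analysis over the five alternatives 3(a)--3(e), using \eqref{abcd=acbd} and \eqref{abc=abn+1c} to rearrange and normalize interior letters, \eqref{a2=an} and \eqref{a2b2=b2a2} to reduce and commute blocks at the ends, and \eqref{abma=an+1bma} for the doubled-end case --- your ``canonical form'' framing is just a more systematic packaging of the paper's case-by-case reductions. The only difference is that you also spell out the routine soundness direction, which the paper leaves implicit.
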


\begin{proof} Consider an identity $u\approx v$ such that $\mathbf{K_{n}}\vDash u\approx v$. We can apply Lemma~\ref{KIBSID}. Note that identities $a^{k+1}b^{l+1}\approx b^{l+1}a^{k+1}$ follow from identities \eqref{a2b2=b2a2}, \eqref{abcd=acbd} for all $k,l\geq1$. Consider possible cases:
  \begin{arabiclist}
    \item $h(u)=h(v), Occ(h(u),u)=Occ(h(v),v)=1, Occ(t(u),u),Occ(t(v),v)>1$. Then the identity $u\approx v$ has the form $au'bu''b\approx av'cv''c$. Applying identities \eqref{a2b2=b2a2}, \eqref{abcd=acbd} we can get the identity $a\overline{u}b^{k}\approx a\overline{v}b^{l}$ where $b\notin C(\overline{u}),C(\overline{v})$. Now applying the identity \eqref{a2=an} we get the identity $a\overline{u}b^{k}\approx a\overline{v}b^{k}$. But the identity $\overline{u}\approx \overline{v}$ holds in the variety $\mathbf{C_{n}}\vee\mathbf{Y_{2}}$. Hence, it follows from identities $b\approx b^{n+1}, bc\approx cb$. Therefore the identity $u\approx v$ follows from identities \eqref{a2b2=b2a2}, \eqref{abcd=acbd}, \eqref{abc=abn+1c}.
    \item $t(u)=t(v), Occ(t(u),u)=Occ(t(v),v)=1, Occ(h(u),u),Occ(h(v),v)>1$. This case can be considered in the same way as the previous case by duality.
    \item $h(u)=h(v),t(u)=t(v), Occ(h(u),u)=Occ(h(v),v)=Occ(t(u),u)=Occ(t(v),v)=1$. Then the identity $u\approx v$ has the form $au'b\approx av'b$. But the identity $u'\approx v'$ holds in the variety $\mathbf{C_{n}}\vee\mathbf{Y_{2}}$. Thus, it follows from identities $b\approx b^{n+1}, bc\approx cb$. Therefore the identity $u\approx v$ follows from identities \eqref{abcd=acbd}, \eqref{abc=abn+1c}.
    \item $h(u)=h(v)=t(u)=t(v), Occ(h(u),u)=Occ(h(v),v)=2, \xi_{n}(u)=\xi_{n}(v)=1$. Then the identity $u\approx v$ has the form $au'a\approx av'a$. But in this case the identity $u'\approx v$ holds in the variety $\mathbf{C_{n}}\vee\mathbf{Y_{2}}$. Thus, we can apply the same argument as in the previous case. Therefore the identity $u\approx v$ follows from identities \eqref{abcd=acbd}, \eqref{abc=abn+1c}.
    \item $Occ(h(u),u),Occ(h(v),v),Occ(t(u),u),Occ(t(v),v)>1,\xi_{n}(u),\xi_{n}(v)\neq1$. Note that applying identities \eqref{abcd=acbd}, \eqref{abma=an+1bma} to the identity $u\approx v$ we can get an identity $u'\approx v'$ such that for any letter $x\in C(u')=C(v')$ we have $Occ(x,u'),Occ(x,v')>1$. Now applying identities $a^{k+1}b^{l+1}\approx b^{l+1}a^{k+1}$ we get the identity $x_{1}^{k_{1}}x_{2}^{k_{2}}...x_{q}^{k_{p}}\approx x_{1}^{l_{1}}x_{2}^{l_{2}}...x_{q}^{l_{p}}$ which follows from the identity \eqref{a2=an}.
  \end{arabiclist}
Hence identities \eqref{a2b2=b2a2},\eqref{a2=an},\eqref{abcd=acbd},\eqref{abc=abn+1c}\eqref{abma=an+1bma} constitute an identity basis of the variety $\mathbf{K_{n}}$.
\end{proof}

\subsection{Semigroups $F_{\lambda}$ and $F_{\rho}$}

By duality, it suffices to consider only the semigroup $F_{\lambda}$.

\begin{lemma}
\label{FIBSID}
The variety $\mathbf{F_{\lambda}}$ satisfies a non-trivial identity $u\approx v$ if and only if $h(u)=h(v)$ and $h_{2}(u)=h_{2}(v)$
\end{lemma}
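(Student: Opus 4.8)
The plan is to show both implications by exploiting the structure of $F_{\lambda}$ as a small semigroup in which products of length $\geq 2$ are controlled by their first two letters. First I would collect the basic arithmetic of $F_{\lambda}$: from the defining relations one checks that $F_{\lambda}$ has eight elements, namely $x,y,xy,yx,x^2=x^3,y^2=y^3$ together with (if present) a possible extra element or zero; more precisely, the relations $xy=xyx=xy^2$, $x^2=x^2y=x^3$ say exactly that any word beginning with the pair $xx$ collapses to $x^2$, any word beginning with $xy$ collapses to $xy$, and symmetrically with $x,y$ interchanged. The key computational lemma is therefore: \emph{for any word $w$ with $l(w)\geq 2$, the value $\phi(w)$ under a homomorphism $\phi:\mathcal{X}^{+}\to F_{\lambda}$ depends only on $\phi(h(w))$ and $\phi(h_2(w))$} — indeed $\phi(w)=\phi(h(w))\phi(h_2(w))$, since appending further generators on the right does nothing. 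For length-one words there is nothing to prove, and length-one words are isoterms because $F_{\lambda}$ has a generator that is indecomposable (e.g.\ $x$), so an identity with $l(u)=1$ forces $u\equiv v$.

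For \textbf{necessity}: suppose $\mathbf{F_{\lambda}}\vDash u\approx v$ with $u\not\equiv v$. Then $l(u),l(v)\geq 2$ (a length-one side would force triviality as above, using that $N_2\in\mathbf{F_{\lambda}}$ or directly that a generator is indecomposable). To pin down $h(u)=h(v)$, I would use a homomorphism that detects the head: map $h(u)\mapsto x$ and every other letter to $y$; then $\phi(u)=x\cdot\phi(h_2(u))\in\{xy,x^2\}$, both of which are distinct from $y^2$ and from $y\cdot(\text{anything})$, so $\phi(v)$ must also have head-image $x$, forcing $h(v)=h(u)$. Then, knowing $h(u)=h(v)=:a$, I would distinguish $h_2$: send $a\mapsto x$, $h_2(u)\mapsto y$, all other letters to $x$; then $\phi(u)=xy$, whereas if $h_2(v)\neq h_2(u)$ the image $\phi(h_2(v))=x$ and $\phi(v)=x^2\neq xy$, a contradiction. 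Hence $h_2(u)=h_2(v)$.

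For \textbf{sufficiency}: suppose $h(u)=h(v)$ and $h_2(u)=h_2(v)$, and $u\not\equiv v$ (so in particular $l(u),l(v)\geq 2$). Let $\phi$ be any homomorphism into $F_{\lambda}$. By the key computational lemma, $\phi(u)=\phi(h(u))\phi(h_2(u))=\phi(h(v))\phi(h_2(v))=\phi(v)$. Hence $\mathbf{F_{\lambda}}\vDash u\approx v$.

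The main obstacle is the key computational lemma — i.e., verifying from the twelve-ish defining relations of $F_{\lambda}$ that \emph{every} generator, when multiplied on the right by any generator, gives something already determined, so that no word of length $\geq 2$ can ``see'' beyond its first two letters. Concretely one must check the four products $x\cdot x$, $x\cdot y$, $y\cdot x$, $y\cdot y$ are idempotent-like in the sense that multiplying any of them on the right by $x$ or $y$ returns the same element; this follows from the relations $xy=xyx=xy^2$, $x^2=x^2y=x^3$ and their duals, but it requires care to confirm there are no hidden products (such as $yx\cdot y$ or $xy\cdot x$) producing new elements — one should verify $yxy=yx$ and $xyx=xy$ hold in $F_{\lambda}$, which they do by the stated relations. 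Once this closure property is established, both directions of the lemma are short, and the polynomial recognizability and finite basis claims for $\mathbf{F_{\lambda}}$ (treated in the subsequent lemmas) follow the same pattern as for the earlier indicator semigroups, using that $h(u)=h(v)$ is given by $x\approx xy$-type reasoning and $h_2$ by an analogue.
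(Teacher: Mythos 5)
Your proposal is correct and follows essentially the same route as the paper: the four two‑letter products $x^{2},y^{2},xy,yx$ are left zeros, so $\phi(w)=\phi(h(w))\phi(h_{2}(w))$ for every word $w$ with $l(w)\geq2$, which gives sufficiency at once, and your two detecting substitutions for necessity are the same ones the paper uses (the paper derives $h(u)=h(v)$ from the subsemigroup $\{xy,yx\}\cong L_{2}$, which amounts to your head substitution). Two small points: your second substitution is only well defined when $h_{2}(u)\neq h(u)$, so you need the harmless ``without loss of generality'' step (swap $u$ and $v$ otherwise) that the paper states explicitly; and $F_{\lambda}$ in fact has exactly six elements, not eight, though nothing in your argument depends on the count.
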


\begin{proof}\textbf{Necessity.} Assume that the identity $u\approx v$ holds in the variety $\mathbf{F_{\lambda}}$. Note that the subset $\{xy, yx\}$ constitutes a semigroup isomorphic to $L_{2}$. Thus the equality $h(u)=h(v)$ holds. Now we are going to show that $h_{2}(u)=h_{2}(v)$. Indeed, otherwise we may assume without loss of generality that $h_{2}(u)\neq h(u)$ and consider the following map
$$\phi(a)=\begin{cases}
x, &\text{if }  a=h(u);\\
y, &\text{if }  a=h_{2}(u);\\
x^{2}, &\text{otherwise}
\end{cases}$$
Then we have $\phi(u)=xy,\phi(v)=x^{2}$. But it is in contradiction with hypothesis that the identity $u\approx v$ holds in the variety $\mathbf{F_{\lambda}}$. Hence $h_{2}(u)=h_{2}(v)$.

\textbf{Sufficiency.} Consider an identity $u\approx v$ such that $h(u)=h(v)$ and $h_{2}(u)=h_{2}(v)$. Consider any map $\phi:\mathcal{X^{+}}\rightarrow F_{\lambda}$. Then $\phi(h(u)h_{2}(u))=\phi(h(v)h_{2}(v))\in\{x^{2},y^{2},xy,yx\}$. But $\phi(h(u)h_{2}(u)),\phi(h(v)h_{2}(v))$ is a left zero in the semigroup $F_{\lambda}$. Hence $\phi(u)=\phi(h(u)h_{2}(u))=\phi(h(v)h_{2}(v))=\phi(v)$. Therefore the identity $u\approx v$ holds in the variety $\mathbf{F_{\lambda}}$.
\end{proof}

Lemma \ref{FIBSID} readily implies the following

\begin{lemma}\label{FIBSFB} The identity
\begin{equation}\label{ab=abc}
ab\approx abc
\end{equation}
forms an identity basis of the variety $\mathbf{F_{\lambda}}$.\qed
\end{lemma}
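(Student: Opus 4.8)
The plan is to read everything off the description of the equational theory of $\mathbf{F_{\lambda}}$ supplied by Lemma \ref{FIBSID}. Write $\Sigma$ for the single identity $ab\approx abc$. Two things have to be verified: that $\mathbf{F_{\lambda}}\vDash\Sigma$, and that every identity valid in $\mathbf{F_{\lambda}}$ is a consequence of $\Sigma$. For the first, note that the words $ab$ and $abc$ (with $a,b,c$ distinct letters) satisfy $h(ab)=h(abc)=a$ and $h_{2}(ab)=h_{2}(abc)=b$, so $ab\approx abc$ holds in $\mathbf{F_{\lambda}}$ by Lemma \ref{FIBSID}.

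For completeness, let $u\approx v$ be a non-trivial identity with $\mathbf{F_{\lambda}}\vDash u\approx v$. By Lemma \ref{FIBSID} the letters $h(u)=h(v)$ and $h_{2}(u)=h_{2}(v)$ are defined, so $l(u),l(v)\geq 2$, and we may factor $u=abu'$ and $v=abv'$ with $a=h(u)=h(v)$, $b=h_{2}(u)=h_{2}(v)$, and $u',v'$ possibly empty words (the case $a=b$ is allowed). From $\Sigma$ one derives, by substituting $c$ by an arbitrary nonempty word $w$, the identities $ab\approx abw$; hence $u=abu'\approx ab\approx abv'=v$, so $\Sigma\vdash u\approx v$ (when $u'$ or $v'$ is empty the corresponding step is trivial). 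Therefore $\Sigma$ is an identity basis of $\mathbf{F_{\lambda}}$.

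No genuine obstacle is expected here; the only points needing a word of care are that non-trivial identities of $\mathbf{F_{\lambda}}$ automatically have both sides of length at least $2$—which is precisely what makes the hypothesis $h_{2}(u)=h_{2}(v)$ in Lemma \ref{FIBSID} meaningful and what licenses the factorisation $u=abu'$—and the possibilities $a=b$ or empty tails $u',v'$, all of which are absorbed uniformly by the substitution $c\mapsto w$ applied to $ab\approx abc$.
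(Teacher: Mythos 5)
Your proof is correct and follows exactly the route the paper intends: the paper omits the argument entirely (marking Lemma \ref{FIBSFB} as an immediate consequence of Lemma \ref{FIBSID}), and your verification---that $ab\approx abc$ satisfies the head/second-letter criterion, and that any valid identity $u\approx v$ factors as $abu'\approx abv'$ and collapses to $ab$ via the substitution instances $ab\approx abw$---is precisely the ``readily implies'' step being left to the reader. Your attention to the degenerate cases ($a=b$, empty tails, sides of length $1$) is a welcome bit of extra care.
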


\subsection{Semigroups $W_{\lambda}$ and $W_{\rho}$}

By duality, we consider only the semigroup $W_{\lambda}$.

To prove results in this section we need the following construction: each word $u$ is associated with an undirected graph $Gr(u)$ with
vertex set $C(u)\times \{0,1\}$ connected as follows: we draw an edge from $(x,0)$ to $(y,1)$ if and only if $xy$ is a factor of $u$.

The semigroup $B_{2}$ plays crucial role here. A identity basis of the variety $\mathbf{B_{2}}$ was found by Trahtman \cite{Trahtman1}. But his proof has a gap. Reilly \cite{Reilly1} reproved Trahtman's resuls. A solution to the word problem for $B_{2}$ was first provided by Mashevitsky \cite{Mashevitzky5}. Reilly \cite{Reilly1} gave another proof of  Mashevitsky's result in the most convenient terms. His solution can be stated as follows:

\begin{lemma}\cite[Theorem~5.1]{Reilly1}
\label{B2ID}
The variety $\mathbf{B_{2}}$ satisfies an identity $u\approx v$ if and only if the following conditions are satisfied:

\begin{arabiclist}
  \item $C(u)=C(v)$,
  \item the graphs $Gr(u),Gr(v)$ have the same connected components,
  \item the vertices $(h(u),1),(h(v),1)$ lie in the the same connected component,
  \item the vertices $(t(u),0),(t(v),0)$ lie in the the same connected component.
\end{arabiclist}
\end{lemma}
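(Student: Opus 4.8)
The plan is to build an explicit dictionary between substitutions $\phi\colon\mathcal{X}\to B_{2}$ and $2$-colourings of the graph $Gr(u)$, and then to read off all four conditions from it. Write the four non-zero elements of $B_{2}$ as matrix units $e_{11},e_{12},e_{21},e_{22}$, so that $e_{ij}e_{kl}=e_{il}$ when $j=k$ and $e_{ij}e_{kl}=0$ otherwise. If some letter of $u$ (or of $v$) is sent to $0$, then that side of the identity equals $0$, since $0$ is a zero of $B_{2}$; so once we know $C(u)=C(v)$ we may restrict attention to substitutions with $\phi(x)=e_{i(x)j(x)}$ for every $x\in C(u)$, and encode such a $\phi$ by the colouring $c$ of $C(u)\times\{0,1\}$ given by $c(x,1)=i(x)$ and $c(x,0)=j(x)$. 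The key computation, which I would carry out first, is that for $u=x_{1}\cdots x_{n}$ one has $\phi(u)=e_{i(x_{1})j(x_{n})}$ when $j(x_{k})=i(x_{k+1})$ for every $k$ and $\phi(u)=0$ otherwise; since the two-letter factors $x_{k}x_{k+1}$ of $u$ are exactly the edges of $Gr(u)$, this says that $\phi(u)\neq0$ if and only if $c$ takes equal values on the two ends of every edge of $Gr(u)$, i.e.\ if and only if $c$ is constant on each connected component of $Gr(u)$, in which case $\phi(u)=e_{ab}$, where $a$ is the colour of the component containing $(h(u),1)$ and $b$ is the colour of the component containing $(t(u),0)$. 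Conversely every assignment of colours from $\{1,2\}$ to the connected components of $Gr(u)$ is realised by some such $\phi$.

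Granting this dictionary, necessity is short. Suppose $\mathbf{B_{2}}\vDash u\approx v$. The subsemigroup $\{e_{11},0\}$ is isomorphic to $Y_{2}$, so $\mathbf{Y_{2}}\subseteq\mathbf{B_{2}}$; hence $\mathbf{Y_{2}}\vDash u\approx v$ and Lemma~\ref{MSB} gives $C(u)=C(v)$, so $Gr(u)$ and $Gr(v)$ have the same vertex set. If some two-letter factor $xy$ of $u$ had its endpoints $(x,0),(y,1)$ in distinct components of $Gr(v)$, I would colour those two components $1$ and $2$ and colour every other component of $Gr(v)$ by $1$: then $\phi(v)\neq0$, while $\phi(u)=0$ because the edge joining $(x,0)$ and $(y,1)$ in $Gr(u)$ has mismatched colours, contradicting $\phi(u)=\phi(v)$. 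Thus every edge of $Gr(u)$ lies inside a single component of $Gr(v)$, and by the symmetric argument conversely, so the two graphs have the same connected components, which is condition (2). Having established (2), for condition (3): if $(h(u),1)$ and $(h(v),1)$ lay in different components, colouring those two components $1$ and $2$ and all others $1$ gives a $\phi$ for which $\phi(u)$ and $\phi(v)$ are non-zero matrix units with different first indices, hence distinct. Condition (4) is the dual statement, obtained from $(t(u),0)$ and $(t(v),0)$.

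For sufficiency, assume conditions (1)--(4) and take an arbitrary $\phi\colon\mathcal{X}\to B_{2}$. If some letter of $C(u)=C(v)$ is sent to $0$ then $\phi(u)=\phi(v)=0$; otherwise form the colouring $c$ as above. If $\phi(u)=0$, then, were $\phi(v)\neq0$, the colouring $c$ would be constant on every component of $Gr(v)$, hence on every component of $Gr(u)$ by (2), hence on the two ends of every edge of $Gr(u)$, forcing $\phi(u)\neq0$; so $\phi(v)=0=\phi(u)$. If instead $\phi(u)\neq0$, the same reasoning gives $\phi(v)\neq0$, and $c$ is constant on the common connected components; then $\phi(u)=e_{ab}$ and $\phi(v)=e_{a'b'}$, where $a=a'$ since $(h(u),1)$ and $(h(v),1)$ share a component by (3) and $b=b'$ since $(t(u),0)$ and $(t(v),0)$ share a component by (4); thus $\phi(u)=\phi(v)$. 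Hence $\phi(u)=\phi(v)$ in every case, so $\mathbf{B_{2}}\vDash u\approx v$. The one genuinely delicate point in this plan is the bookkeeping inside the dictionary --- matching the labels $0$ and $1$ to the ``out'' and ``in'' indices of the matrix units so that the edges of $Gr(u)$ correspond exactly to the adjacency constraints $j(x_{k})=i(x_{k+1})$ --- together with the discipline of phrasing everything in terms of the component partitions, since hypotheses (2)--(4) constrain only those partitions and not the edge sets themselves.
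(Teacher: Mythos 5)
Your argument is correct. Note first that the paper itself offers no proof of this lemma: it is quoted verbatim from Reilly (Theorem 5.1 of the cited article), so there is nothing in the source to compare against line by line. What you have done is supply a complete, self-contained proof, and it holds up. The central dictionary is set up with the right orientation: for a substitution $\phi(x)=e_{i(x)j(x)}$ the colouring $c(x,1)=i(x)$, $c(x,0)=j(x)$ makes the non-vanishing condition $j(x_k)=i(x_{k+1})$ for the factor $x_kx_{k+1}$ coincide exactly with ``$c$ agrees on the endpoints $(x_k,0)$ and $(x_{k+1},1)$ of the corresponding edge of $Gr(u)$,'' and the surviving value $e_{i(x_1)j(x_n)}$ is read off from the components of $(h(u),1)$ and $(t(u),0)$, matching the indices $1$ and $0$ used in conditions (3) and (4). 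The necessity of (2) is handled correctly at the level of partitions (each component of $Gr(u)$ is contained in one of $Gr(v)$ and conversely, which together with $C(u)=C(v)$ forces equality), and the sufficiency direction only ever uses the component partitions, which is all that hypotheses (2)--(4) give you. The only omission worth flagging is cosmetic: you implicitly use that the vertex sets of $Gr(u)$ and $Gr(v)$ agree before comparing their components, but you do derive $C(u)=C(v)$ first via the subsemigroup $\{e_{11},0\}\cong Y_2$ and Lemma \ref{MSB}, so the order of the argument is sound. In short: where the paper defers to the literature, your proof is an acceptable direct verification.
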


We need also the following corollary:

\begin{corollary}\label{B2IDC} Let the identity $u\approx v$ hold in the variety $\mathbf{B_{2}}$ and the word $u$ has the form $u=u'xu''$ where $x\notin C(u'u'')$ and $C(u')\cap C(u'')=\emptyset$. Then the word $v$ has the form $v=v'xv''$ and identities $u'\approx v', u''\approx v''$ hold in the variety $\mathbf{B_{2}}$.
\end{corollary}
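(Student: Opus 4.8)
The plan is to deduce the statement directly from the combinatorial description of $\mathbf{B_{2}}$-identities given by Lemma \ref{B2ID}, using the structure of the graph $Gr(u)$ in the degenerate situation where $x$ occurs exactly once and separates the rest of the word into two parts with disjoint contents. First I would observe that since $x\notin C(u'u'')$ and $C(u')\cap C(u'')=\emptyset$, the only occurrence of $x$ in $u$ sits at the junction, so the factors of $u$ of length two are: all factors inside $u'$, all factors inside $u''$, plus $t(u')\,x$ and $x\,h(u'')$, and nothing else. Hence in $Gr(u)$ the vertex $(x,0)$ is joined only to $(h(u''),1)$, the vertex $(x,1)$ is joined only to $(t(u'),0)$, and no edge crosses between the ``$u'$-side'' vertices $C(u')\times\{0,1\}$ and the ``$u''$-side'' vertices $C(u'')\times\{0,1\}$ except through $x$. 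In particular, deleting the two vertices $(x,0),(x,1)$ disconnects $Gr(u)$ into the induced subgraph on $C(u')\times\{0,1\}$, which I claim equals $Gr(u')$, and the induced subgraph on $C(u'')\times\{0,1\}$, which equals $Gr(u'')$; this identification of induced subgraphs with $Gr(u')$ and $Gr(u'')$ holds because the length-two factors of $u'$ (resp.\ $u''$) are exactly the length-two factors of $u$ among letters of $C(u')$ (resp.\ $C(u'')$).

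Next I would use Lemma \ref{B2ID}(1) to get $C(v)=C(u)$, so $x$ occurs in $v$ and $C(v)=C(u')\sqcup\{x\}\sqcup C(u'')$. By Lemma \ref{B2ID}(2) the graph $Gr(v)$ has the same connected components as $Gr(u)$. The key point is that $(x,0)$ and $(x,1)$ are cut vertices of $Gr(u)$ in the strong sense above, and the two ``sides'' $C(u')\times\{0,1\}$ and $C(u'')\times\{0,1\}$ meet different components or are glued only through $x$; since connected components are determined by the partition of the vertex set, $Gr(v)$ must exhibit the same separation, which forces $x$ to occur exactly once in $v$ (a second occurrence of $x$ would create an extra edge at $(x,0)$ or $(x,1)$, altering a component or merging the two sides improperly — here I would check the short case analysis on where the factors of $v$ can lie) and forces every length-two factor of $v$ to lie entirely within $C(u')$, entirely within $C(u'')$, or be $t(v')x$ or $xh(v'')$ for the resulting decomposition $v=v'xv''$. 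Thus $Gr(v')$ and $Gr(v'')$ are exactly the induced subgraphs of $Gr(v)$ on the two sides, which coincide with those of $Gr(u)$, i.e.\ with $Gr(u')$ and $Gr(u'')$ respectively.

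Finally I would verify conditions (3) and (4) of Lemma \ref{B2ID} for the pairs $(u',v')$ and $(u'',v'')$. For $u'\approx v'$: the head of $u$ is $h(u')$ (as $u'$ is nonempty; if $u'$ is empty the statement about $u'$ is vacuous) and likewise $h(v)=h(v')$, and by Lemma \ref{B2ID}(3) applied to $u\approx v$ the vertices $(h(u'),1)$ and $(h(v'),1)$ lie in the same component of $Gr(u)=Gr(v)$; since that component is contained in the $u'$-side (it does not contain $(x,1)$, which is attached to $(t(u'),0)$, unless $u'$ is a single letter — a subcase I would dispatch directly), it is the same as a component of $Gr(u')$, giving (3) for $u'\approx v'$. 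Condition (4) for $u'\approx v'$ follows similarly, noting $t(u')$ is the unique letter with an edge $(\,\cdot\,,0)\!-\!(x,1)$; symmetrically one obtains (3) and (4) for $u''\approx v''$. Applying Lemma \ref{B2ID} in the reverse direction then yields $\mathbf{B_{2}}\vDash u'\approx v'$ and $\mathbf{B_{2}}\vDash u''\approx v''$. The main obstacle I anticipate is the bookkeeping in the previous paragraph: rigorously ruling out a repeated occurrence of $x$ in $v$ and the repeated occurrences or cross-side factors, i.e.\ showing the decomposition $v=v'xv''$ with the disjointness properties is forced rather than merely possible; this is where the precise ``cut vertex'' analysis of $Gr(u)$ must be carried out carefully, including the small boundary cases where $u'$ or $u''$ is empty or a single letter.
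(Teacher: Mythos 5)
The paper offers no proof of this corollary at all---it is presented as an immediate consequence of Lemma \ref{B2ID}---so your derivation from Reilly's graph criterion is exactly the intended route, and its overall strategy is sound: partition the vertex set of $Gr(u)$ into $P_{1}=(C(u')\times\{0,1\})\cup\{(x,1)\}$ and $P_{2}=(C(u'')\times\{0,1\})\cup\{(x,0)\}$, note that every edge of $Gr(u)$ stays inside one part, conclude via condition (2) of Lemma \ref{B2ID} that every edge of $Gr(v)$ does too, read off the forced factorization $v=v'xv''$, and transfer conditions (1)--(4) to the two pieces. Three of your intermediate claims are stated too strongly or too loosely, though none is fatal. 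First, the induced subgraphs of $Gr(u)$ and $Gr(v)$ on $C(u')\times\{0,1\}$ need not \emph{coincide} as graphs: condition (2) equates connected components, not edge sets. What you actually need, and what is true, is that $Gr(u')$ and $Gr(v')$ have the same components; this follows because $(x,1)$ has degree one in both $Gr(u)$ and $Gr(v)$ (its sole neighbour being $(t(u'),0)$, resp.\ $(t(v'),0)$), so deleting it from its common component leaves that component connected in both graphs. Second, the parenthetical asserting that the component of $(h(u'),1)$ avoids $(x,1)$ unless $u'$ is a single letter is false --- take $u'=aa$, where $(a,1)$, $(a,0)$ and $(x,1)$ all lie in one component --- but no case split is needed: the same degree-one deletion argument shows that $(h(u'),1)$ and $(h(v'),1)$ remain in one component of $Gr(u')$ whether or not $(x,1)$ belongs to their component in $Gr(u)$. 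Third, the argument that $x$ occurs only once in $v$ should be phrased not as ``an extra edge would alter a component'' (adding an edge inside an existing component alters nothing) but as: any factor $cd$ of $v$ with $(c,0)$ and $(d,1)$ in different parts would place a component of $Gr(v)$ astride $P_{1}$ and $P_{2}$, contradicting (2); reading $v$ left to right, this forces every letter after the first occurrence of $x$ to lie in $C(u'')$ and every letter before it to lie in $C(u')$, which simultaneously rules out a second $x$ and any cross-side factor. With these repairs your outline closes completely.
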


\begin{proposition}\cite[Corollary~9.2]{Reilly1}\label{B2L2FB} The variety $\mathbf{L_{2}}\vee\mathbf{B_{2}}$ is defined by \eqref{a2=a3} and identities
\begin{align}
\label{aba=ababa}
& aba\approx ababa,\\
\label{ab2c2=ac2b2}
& ab^{2}c^{2}\approx ac^{2}b^{2}.
\end{align}
\end{proposition}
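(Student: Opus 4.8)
The plan is to establish the two inclusions $\mathbf{L_{2}}\vee\mathbf{B_{2}}\subseteq V(\Sigma)$ and $V(\Sigma)\subseteq\mathbf{L_{2}}\vee\mathbf{B_{2}}$, where $\Sigma$ denotes the set of the three identities \eqref{a2=a3}, \eqref{aba=ababa}, \eqref{ab2c2=ac2b2}. For the first inclusion it is enough to check that $L_{2}$ and $B_{2}$ satisfy every identity of $\Sigma$: in $L_{2}$ every non-empty product equals its head, so the three identities hold there trivially; and for $B_{2}$ one invokes the word-problem description of Lemma~\ref{B2ID}, verifying for each identity that the two sides have equal content, that their graphs $Gr(\cdot)$ have the same connected components, and that head-vertex and tail-vertex fall into matching components. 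For instance both sides of $aba\approx ababa$ induce exactly the factors $ab$ and $ba$, hence literally the same graph; and both sides of $ab^{2}c^{2}\approx ac^{2}b^{2}$ leave $(a,1)$ isolated, put the remaining five vertices of the graph in a single component, and place their tail-vertex ($(c,0)$, resp. $(b,0)$) in that component.

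For the converse, suppose $u\approx v$ holds in $\mathbf{L_{2}}\vee\mathbf{B_{2}}$; the aim is to derive it from $\Sigma$. By Lemma~\ref{MSB}(1) together with Lemma~\ref{B2ID}, this hypothesis says exactly that $h(u)=h(v)$, $C(u)=C(v)$, the graphs $Gr(u)$ and $Gr(v)$ have the same connected components, and the vertices $(h(u),1),(h(v),1)$ lie in a common component, as do $(t(u),0),(t(v),0)$. So I would attach to each word $w$ a normal form $\widehat w$, reachable from $w$ using only $\Sigma$ and depending only on the data just listed; then $\widehat u=\widehat v$, and $\Sigma\vdash u\approx\widehat u=\widehat v\approx v$ finishes the argument.

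The construction of $\widehat w$ rests on the routine consequences of $\Sigma$: from \eqref{a2=a3} one gets $x^{k}\approx x^{2}$ for $k\ge 2$, so any maximal power may be shortened to a square; from \eqref{ab2c2=ac2b2} (together with \eqref{a2=a3}) two adjacent squared blocks sitting after a non-empty prefix may be transposed, so a trailing product of squared blocks can be sorted into a fixed order of its letters; and \eqref{aba=ababa} yields oscillation-shortening rules such as $aba\approx a(ba)^{k}$ and, after substituting squares, $ab^{2}a\approx a(b^{2}a)^{k}$. With these in hand I would reduce $w$ by induction on $|C(w)|$: if some letter $x$ is simple in $w$ and $w=w'xw''$ with $C(w')\cap C(w'')=\emptyset$, reduce $w'$ and $w''$ separately, as Corollary~\ref{B2IDC} permits; otherwise process each connected component of $Gr(w)$ along a canonical walk covering its edges, using the oscillation-shortening rules to collapse repeated passes and pushing every already-traversed letter into the sorted trailing squares. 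By construction $\widehat w$ retains the content, the graph components, the head, and the head- and tail-components of $w$, while the determinism of the walk and of the sorting forces two words with equal invariants to receive the same normal form.

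The hard part will be this last reduction in the case when $w$ has no separating simple letter: one must show that the three identities of $\Sigma$ already generate enough rerouting of a word inside a fixed connected component of $Gr(w)$ — this is where \eqref{aba=ababa} carries the weight — while keeping the head $h(w)$ (the $\mathbf{L_{2}}$-invariant) fixed, so that the starting point of the canonical walk is forced and the normal form is genuinely canonical. The remainder is bookkeeping with the consequences of $\Sigma$ recorded above.
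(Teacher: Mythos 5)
The first thing to note is that the paper does not prove this proposition at all: it is quoted verbatim from Reilly \cite{Reilly1}, Corollary~9.2, so there is no in-paper argument to compare yours against. Your verification of the easy inclusion is correct: $L_{2}$ satisfies all three identities because every product equals its head, and for $B_{2}$ the check of content, graph components, and head-/tail-vertex components via Lemma~\ref{B2ID} for each of \eqref{a2=a3}, \eqref{aba=ababa}, \eqref{ab2c2=ac2b2} goes through exactly as you describe.

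The converse direction, however, contains a genuine gap, and that direction is the entire substance of the result. You reduce the problem to constructing a normal form $\widehat{w}$ that is (a) reachable from $w$ by applications of the three identities and (b) determined by the invariants $h(w)$, $C(w)$, the components of $Gr(w)$, and the components containing $(h(w),1)$ and $(t(w),0)$. But you never define $\widehat{w}$ precisely, never exhibit the sequence of $\Sigma$-rewritings reaching it, and never prove that two words with equal invariants receive the same normal form; you explicitly defer ``the hard part,'' namely rerouting a word inside a connected component of its graph using only \eqref{aba=ababa} while keeping the head fixed. That deferred step is exactly where the difficulty lives: it is the analogue of Trahtman's finite-basis argument for $B_{2}$, whose original proof (as this paper itself remarks) had a gap and had to be redone by Reilly. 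In particular, it is not evident that oscillation-shortening plus sorting of trailing squares generates enough rewriting to realize an arbitrary reordering of the walk within a component, nor that the canonical walk can always be started at the forced head without leaving the reach of $\Sigma$. As written, your argument establishes only that the three identities hold in $\mathbf{L_{2}}\vee\mathbf{B_{2}}$ and restates the known characterization of that variety's equational theory; the completeness of the basis remains unproved, and either citing Reilly's Corollary~9.2 (as the paper does) or fully carrying out the normal-form induction is required to close it.
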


The semigroup $B_{2}$ can be presented as

$$B_{2}=\{(i,j) \mid i,j\in\{0,1\}\}\cup\{0\}$$

with the following binary operation

$$(i,j)(k,l)=\begin{cases}
(i,l), &\text{if }  j=k,\\
0, &\text{otherwise.}\end{cases}$$.

Denote by $\Gamma$ the set

$$\{(i,j,k,1) \mid i,j,k\in\{0,1\}\}\cup\{(0,1,2,0),(1,0,0,0),(1,0,1,0)\}\cup\{0\}$$

and define a binary operation by the following formula:

$$(i_{1},j_{1},k_{1},l_{1})(i_{2},j_{2},k_{2},l_{2}),=\begin{cases}
(i_{1},j_{2},k_{1},1), &\text{if }  j_{1}=i_{2}, k_{1}\neq2,\\
(i_{1},j_{2},k_{2},1), &\text{if }  j_{1}=i_{2}, k_{1}=2.\\
0, &\text{otherwise}\end{cases}$$

It is easy to check that this operation is associative i.e. $\Gamma$ is a semigroup. More precisely $\Gamma$ is isomorphic to $W_{\lambda}$. Indeed, isomorphism can be define on the generators as follows: $a\mapsto (0,1,2,0)$, $x\mapsto (1,0,0,0)$, $y\mapsto (1,0,1,0)$, $0\mapsto 0$.

It is easy to see that the semigroup $B_{2}$ is a homomorphic image of $\Gamma$. Define the homomorphism $\tau:\Gamma\rightarrow B_{2}$ on the generators as follows: $(0,1,2,0)\mapsto (0,1)$, $(1,0,0,0)\mapsto (1,0)$, $(1,0,1,0)\mapsto (1,0)$, $0\mapsto 0$.

\begin{lemma}
\label{WIBSID}
The variety $\mathbf{W_{\lambda}}$ satisfies an identity $u\approx v$ if and only if the following conditions are satisfied:

\begin{arabiclist}
  \item $\mathbf{L_{2}}\vee\mathbf{B_{2}}\vDash u\approx v,$
  \item if $(h(u),0),(h(u),1)$ does not lie in the same connected component of $Gr(u)$ then $h_{2}(u)=h_{2}(v).$
\end{arabiclist}
\end{lemma}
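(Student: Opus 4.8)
The plan is to argue throughout inside the explicit model $\Gamma\cong W_{\lambda}$ and to exploit the projection $\tau:\Gamma\to B_{2}$. First I would record two facts. (a) $\mathbf{L_{2}}\vee\mathbf{B_{2}}\subseteq\mathbf{W_{\lambda}}$: the pair $\{(0,0,0,1),(0,0,1,1)\}=\{ax,ay\}$ is a left-zero subsemigroup of $\Gamma$, and $\tau$ is onto, so both $L_{2}$ and $B_{2}$ lie in $\mathbf{W_{\lambda}}$; hence condition~1 is necessary. (b) For products of generators of $\Gamma$, vanishing is governed solely by the requirement ``second coordinate of each factor equals first coordinate of the next'', which is exactly the condition governing vanishing of the $\tau$-image in $B_{2}$; therefore $\tau^{-1}(0)=\{0\}$. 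From the multiplication rule I would also extract the observation that, for $\phi:\mathcal{X^{+}}\to\Gamma$ with $\phi(u)\neq 0$ and $l(u)\geq 2$, the value $\phi(u)$ has fourth coordinate $1$, first coordinate that of $\phi(h(u))$, and third coordinate that of $\phi(h(u))$ unless $\phi(h(u))=(0,1,2,0)=a$, in which case its third coordinate is that of $\phi(h_{2}(u))$.

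\textbf{Necessity.} Assuming $\mathbf{W_{\lambda}}\vDash u\approx v$, condition~1 follows from (a), so by Lemma~\ref{B2ID} the graphs $Gr(u)$, $Gr(v)$ partition the common vertex set $C(u)\times\{0,1\}$ into the same connected components, while $h(u)=h(v)$ since $L_{2}\in\mathbf{W_{\lambda}}$ (Lemma~\ref{MSB}). If $(h(u),0)$ and $(h(u),1)$ lie in different components of $Gr(u)$ I must show $h_{2}(u)=h_{2}(v)$ (the one-letter case is trivial, and there $h_{2}(u),h_{2}(v)\neq h(u)$, since otherwise the factor $aa$ would put $(h(u),0),(h(u),1)$ in one component). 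Suppose not. I would $2$-colour the components by a map $g$ equal to $1$ on the component of $(h(u),0)$ and $0$ on that of $(h(u),1)$ --- possible precisely because these differ --- and then set $i_{z}:=g(\mathrm{comp}(z,1))$, $j_{z}:=g(\mathrm{comp}(z,0))$ for each letter $z$, so that $j_{z}=i_{z'}$ for every factor $zz'$ of $u$ or of $v$. Each fibre $\tau^{-1}((i,j))$ contains elements of both third-coordinate values, and $a\in\tau^{-1}((0,1))$; so I would define $\phi$ by choosing $\phi(z)\in\tau^{-1}((i_{z},j_{z}))$, taking $\phi(h(u))=a$, $\phi(h_{2}(u))$ of third coordinate $0$, and $\phi(h_{2}(v))$ of third coordinate $1$ (legitimate since $h(u),h_{2}(u),h_{2}(v)$ are distinct and the head-edges of $Gr(u),Gr(v)$ force $i_{h_{2}(u)}=i_{h_{2}(v)}=1$). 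Then $\phi(u),\phi(v)\neq 0$ by (b), and the multiplication-rule observation shows they differ in the third coordinate --- contradicting $\mathbf{W_{\lambda}}\vDash u\approx v$.

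\textbf{Sufficiency.} Assume conditions~1 and~2 and take any $\phi:\mathcal{X^{+}}\to\Gamma$. Since $\mathbf{B_{2}}\vDash u\approx v$ we get $\tau\phi(u)=\tau\phi(v)$, and $\tau^{-1}(0)=\{0\}$ reduces matters to the case $\phi(u),\phi(v)\neq 0$ (the short case being trivial); then these values share first, second and fourth coordinates, and only the third is in question. If $\phi(h(u))\neq a$ it matches by the multiplication-rule observation together with $h(u)=h(v)$ (which condition~1 implies). If $\phi(h(u))=a=(0,1,2,0)$, then since no letter of $u$ maps to $0$ the assignment $(z,0)\mapsto(\text{2nd coord of }\tau\phi(z))$, $(z,1)\mapsto(\text{1st coord of }\tau\phi(z))$ is constant on components of $Gr(u)$; as $\tau(a)=(0,1)$ this forces $(h(u),0)$ and $(h(u),1)$ into different components, so condition~2 gives $h_{2}(u)=h_{2}(v)$ and hence equal third coordinates. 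Thus $\phi(u)=\phi(v)$ always.

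\textbf{The hard part} will be the homomorphism construction in the necessity direction: one must verify that the fibres of $\tau$ are rich enough --- each carries both third-coordinate values and $a$ sits over $(0,1)$ --- and that the constraints simultaneously imposed on $i_{h(u)},j_{h(u)},i_{h_{2}(u)},i_{h_{2}(v)}$ by the edges incident with the head vertices of $Gr(u)$ and $Gr(v)$ are jointly satisfiable, which is exactly where the hypothesis ``$(h(u),0)$ and $(h(u),1)$ in different components'' is consumed. Beyond that the argument is routine inspection of the multiplication table of $\Gamma$ together with Lemma~\ref{B2ID}.
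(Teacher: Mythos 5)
Your proof is correct and follows essentially the same route as the paper: necessity by lifting a $B_{2}$-assignment with $\psi(h(u))=(0,1)$ to $\Gamma\cong W_{\lambda}$ and separating $\phi(u)$ from $\phi(v)$ in the third coordinate via the choices at $h_{2}(u)$ and $h_{2}(v)$, and sufficiency by projecting along $\tau$ and checking only the third coordinate. The one small difference is that you build the required $B_{2}$-homomorphism explicitly by $2$-colouring the connected components of $Gr(u)$, where the paper simply cites Reilly for its existence; your version is slightly more self-contained but otherwise identical in substance.
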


\begin{proof}\textbf{Necessity.} Let the identity $u\approx v$ holds in the variety $\mathbf{W_{\lambda}}$. Elements $(0,0,0,1),(0,0,1,1)$ constitute subsemigroup of $W_{\lambda}$ isomorphic to $L_{2}$. The semigroup $B_{2}$ is a homomorphic image of $W_{\lambda}$. Hence the identity $u\approx v$ holds in the variety $\mathbf{L_{2}}\vee\mathbf{B_{2}}$. Let us show that if $(h(u),0),(h(u),1)$ does not lay in the same connected component of $Gr(u)$ then $h_{2}(u)=h_{2}(v)$. Assume the converse. I.e. $(h(u),0),(h(u),1)$ does not lie in the same connected component of $Gr(u)$ and $h_{2}(u)\neq h_{2}(v)$. Then from Lemma \ref{B2ID} it is follows that $h(u),h_{2}(u),h_{2}(v)$ are distinct.  From \cite{Reilly1} it follows that there exists a homomorphism $\psi:\mathcal{X^{+}}\rightarrow B_{2}$ such that $\psi(u)\neq 0$ and $\phi(h(u))=(0,1)$ (since $(h(u),0),(h(u),1)$ does not lie in the same connected component of $Gr(u)$). Let $\psi(h_{2}(u))=(\alpha_{1},\beta_{1}),\psi(h_{2}(v))=(\alpha_{2},\beta_{2}), \psi(x_{i})=(l_{i},r_{i})$ for any letter $x_{i}\in C(u)\backslash\{h(u),h_{2}(u),h_{2}(v)\}$ and $\psi(u)=(0,y)$. Consider a homomorphism $\phi:\mathcal{X^{+}}\rightarrow W_{\lambda}$ defined as follows

$h(u)=h(v)\mapsto (0,1,2,0),$

$h_{2}(u)\mapsto (\alpha_{1},\beta_{1},0,1),$

$h_{2}(v)\mapsto (\alpha_{2},\beta_{2},1,1),$

$x_{i}\mapsto (l_{i},r_{i},1,1)$ for any $x_{i}\in C(u)\backslash\{h(u),h_{2}(u),h_{2}(v)\}.$

We obtain that $\phi(u)=(0,y,0,1)$ and $\phi(v)=(0,y,1,1)$ i.e. $\phi(u)\neq\phi(v)$. This contradiction proves the necessity.

\textbf{Sufficiency.} Let the identity $u\approx v$ satisfies Conditions \emph{(1)},\emph{(2)}. Let us show that the identity $u\approx v$ holds in the variety $\mathbf{W_{\lambda}}$. Consider any homomorphism $\phi:\mathcal{X^{+}}\rightarrow W_{\lambda}$. Since the identity $u\approx v$ holds in the variety $\mathbf{B_{2}}$ we find $\tau(\phi(u))=\tau(\phi(v))$. Thus $\phi(u),\phi(v)$ lies in the same equivalence class defined by $\tau$. One can assume that $\phi(u),\phi(v)\neq0$ since equivalence class of [0] consists of one element. Let $\phi(u)=(a_{1},a_{2},a_{3},a_{4})$ and $\phi(v)=(b_{1},b_{2},b_{3},b_{4})$. Since $\tau(\phi(u))=\tau(\phi(v))$ we find that $a_{1}=b_{1}$ and $a_{2}=b_{2}$. Further, $N_{2}\in \mathbf{B_{2}}\subseteq \mathbf{W_{\lambda}}$ implies that $a_{4}=b_{4}=1$. Let us show that $a_{3}=b_{3}$. Let $\phi(h(u))=(a_{1},p,y,z)$ and $\phi(h_{2}(u))=(p,x_{1},y_{1},z_{1})$. If $(h(u),0),(h(u),1)$ lies in the same connected component of $Gr(u)$ then $a_{3}=b_{3}=x$. If $(h(u),0),(h(u),1)$ does not lie in the same connected component of $Gr(u)$ then $h_{2}(u)=h_{2}(v)$. If $y\neq2$ then $a_{3}=b_{3}=y$. If $y=2$ then $y_{1}\neq 2$ (because otherwise we would obtain that $\phi(h(u)h_{2}(u))=0$). Hence $\phi(h(u)h_{2}(u))=(a_{1},x_{1},y_{1},1)$. So $a_{3}=b_{3}=y_{1}$. In both cases we find that $a_{3}=b_{3}$. Hence $\phi(u)=\phi(v)$ i.e. the identity $u\approx v$ holds in the variety $\mathbf{W_{\lambda}}$.
\end{proof}

Now we are ready to find a finite identity basis of the variety $\mathbf{W_{\lambda}}$.

\begin{lemma}
\label{WIBSFB}
   The identities \eqref{a2=a3}, \eqref{aba=ababa} and
\begin{align}
\label{abc2d2=abd2c2}
& abc^{2}d^{2}\approx abc^{2}d^{2}\\
\label{a2b2=ab2a}
& a^{2}b^{2}\approx ab^{2}a
\end{align}
form a identity basis of the variety $\mathbf{W_{\lambda}}$.
\end{lemma}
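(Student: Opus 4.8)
The lemma asserts that the set $\Sigma$ of identities \eqref{a2=a3}, \eqref{aba=ababa}, \eqref{abc2d2=abd2c2}, \eqref{a2b2=ab2a} is a basis of $\mathbf{W_{\lambda}}$, and the plan is to establish the two inclusions $\mathbf{W_{\lambda}}\subseteq V(\Sigma)$ and $V(\Sigma)\subseteq\mathbf{W_{\lambda}}$ separately; the first is a routine check, the second carries the weight. For the first I would verify each identity of $\Sigma$ against the word-problem description of Lemma~\ref{WIBSID}. Its condition (1) (validity in $\mathbf{L_{2}}\vee\mathbf{B_{2}}$) holds for \eqref{a2=a3} and \eqref{aba=ababa} because these occur in the basis of Proposition~\ref{B2L2FB}, for \eqref{abc2d2=abd2c2} because it is obtained from \eqref{ab2c2=ac2b2} by substitution followed by prefixing a letter, and for \eqref{a2b2=ab2a} by a direct inspection of $Gr(a^{2}b^{2})$ and $Gr(ab^{2}a)$ via Lemma~\ref{B2ID} (both graphs are connected, with the same head and tail components). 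Its condition (2) is vacuous for \eqref{a2=a3} and \eqref{a2b2=ab2a} — in both the factor $aa$ joins $(h,0)$ to $(h,1)$ — and for \eqref{aba=ababa} and \eqref{abc2d2=abd2c2} it is satisfied because the two sides share their second letter. Hence $\mathbf{W_{\lambda}}\vDash\Sigma$.

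For the converse, let $u\approx v$ hold in $\mathbf{W_{\lambda}}$. By Lemma~\ref{WIBSID}, $\mathbf{L_{2}}\vee\mathbf{B_{2}}\vDash u\approx v$ (so in particular $h(u)=h(v)$), together with the dichotomy in its condition (2). Comparing $\Sigma$ with the basis of $\mathbf{L_{2}}\vee\mathbf{B_{2}}$ from Proposition~\ref{B2L2FB}: $\Sigma$ contains \eqref{a2=a3} and \eqref{aba=ababa} verbatim, but in place of the block-commutation law $ab^{2}c^{2}\approx ac^{2}b^{2}$ it has only the weaker \eqref{abc2d2=abd2c2} (two squared blocks may be commuted only when they sit behind at least two letters) together with \eqref{a2b2=ab2a}; this weakening is forced, since one checks from Lemma~\ref{WIBSID} that $\mathbf{W_{\lambda}}$ itself fails $ab^{2}c^{2}\approx ac^{2}b^{2}$. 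So the derivation cannot merely invoke Proposition~\ref{B2L2FB}. Instead the plan is to define, using $\Sigma$ only, a normal form $\widehat{w}$ for every word $w$ — recording $C(w)$, the partition of $Gr(w)$ into connected components, the head $h(w)$, the head and tail components, the tail, and, precisely when $(h(w),0)\not\sim(h(w),1)$ in $Gr(w)$, the second letter $h_{2}(w)$ — to prove $\Sigma\vdash w\approx\widehat{w}$, and to prove that $\mathbf{W_{\lambda}}\vDash u\approx v$ forces $\widehat{u}=\widehat{v}$; this yields $\Sigma\vdash u\approx v$ and incidentally re-proves Lemma~\ref{WIBSID}.

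The reduction to normal form I would run by induction on length, localized via Corollary~\ref{B2IDC} (refined to respect the head datum of Lemma~\ref{WIBSID}): a simple letter separating $C(w)$ into two disjoint halves splits the identity into two strictly shorter ones. In the base case one must actually commute squared blocks, and here the head governs everything. If $(h(w),0)\sim(h(w),1)$ in $Gr(w)$, a return walk through the head can be unwound with \eqref{aba=ababa} (and \eqref{a2=a3}) so as to place two letters in front of any block to be moved, after which \eqref{abc2d2=abd2c2} applies; the degenerate case where the only available front material is a second copy of $h(w)$ is exactly what \eqref{a2b2=ab2a} settles, via $a^{2}z^{2}\approx az^{2}a$. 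If instead $(h(w),0)\not\sim(h(w),1)$ in $Gr(w)$, then condition (2) of Lemma~\ref{WIBSID} forces $h_{2}(u)=h_{2}(v)$, so $u$ and $v$ share the prefix $h(u)h_{2}(u)$, every commutation again happens behind two prefix letters, and \eqref{abc2d2=abd2c2} suffices.

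The main obstacle is the case $(h(w),0)\sim(h(w),1)$: turning the purely graph-theoretic hypothesis into an actual $\Sigma$-derivation that manufactures enough prefix letters in front of every block that must be commuted, all the while leaving the head, the head component, and the tail data untouched, and keeping the bookkeeping coherent across the inductive splitting. Organizing the interaction of \eqref{aba=ababa}, \eqref{a2=a3}, \eqref{abc2d2=abd2c2} and \eqref{a2b2=ab2a} into a rewriting procedure terminating at the normal form above is where essentially all the work lies; the remaining steps are bookkeeping once that is in place.
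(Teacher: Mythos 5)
Your soundness half (checking $\mathbf{W_{\lambda}}\vDash\Sigma$ against Lemma \ref{WIBSID}) is correct, as is your key observation that $\Sigma$ deliberately replaces the block-commutation law $ab^{2}c^{2}\approx ac^{2}b^{2}$ of Proposition \ref{B2L2FB} by the weaker \eqref{abc2d2=abd2c2}, and that this weakening is forced because $\mathbf{W_{\lambda}}$ fails $ab^{2}c^{2}\approx ac^{2}b^{2}$ (in $Gr(ab^{2}c^{2})$ the vertex $(a,1)$ is isolated, so condition (2) of Lemma \ref{WIBSID} would require $b=c$). But the completeness half is not a proof: you reduce everything to the existence of a $\Sigma$-normal form recording $C(w)$, the components of $Gr(w)$, the head and tail data, and conditionally $h_{2}(w)$, and then say that constructing the rewriting procedure reaching that normal form ``is where essentially all the work lies.'' That deferred step \emph{is} the lemma — asserting that $\Sigma$ suffices to reach a canonical representative of each $\mathbf{W_{\lambda}}$-class is equivalent to asserting that $\Sigma$ is a basis — so the central derivational content is missing. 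In particular you never exhibit how, in the case $(h(w),0)\sim(h(w),1)$, the identities \eqref{aba=ababa} and \eqref{a2b2=ab2a} actually manufacture the two-letter prefix needed before every block commutation; you only name this as the obstacle.

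For comparison, the paper does not build a normal form at all. It treats Proposition \ref{B2L2FB} as a black box and localizes its derivations behind a prefix. Since $h(u)=h(v)$, write the identity as $au\approx av$. If $a\notin C(u)$, condition (2) of Lemma \ref{WIBSID} gives a common two-letter prefix $ab$, Corollary \ref{B2IDC} yields $\mathbf{L_{2}}\vee\mathbf{B_{2}}\vDash bu'\approx bv'$, and every application of $ab^{2}c^{2}\approx ac^{2}b^{2}$ in a Proposition \ref{B2L2FB} derivation of $bu'\approx bv'$, once carried out inside the context $a\,(-)$, sits behind at least two letters and hence is an instance of \eqref{abc2d2=abd2c2}. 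If $a\in C(u)$, the derived identity $a^{2}ba\approx aba^{2}$ (obtained from \eqref{aba=ababa} and \eqref{a2b2=ab2a}) together with \eqref{aba=ababa} converts $au'au''$ into $a^{2}u'au''$, i.e.\ doubles the head letter; behind the resulting prefix $a^{2}$ the same localization of Proposition \ref{B2L2FB} applies, and \eqref{aba=ababa} undoes the duplication afterwards. Supplying this mechanism (or a complete substitute for it) is exactly what your proposal lacks. Incidentally, you are right to read \eqref{abc2d2=abd2c2} as $abc^{2}d^{2}\approx abd^{2}c^{2}$; as printed in the statement it is a trivial identity and should be corrected.
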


\begin{proof} Note that the identity
\begin{equation}
\label{a2ba=aba2}
a^{2}ba\approx aba^{2}
\end{equation}
follows from identities \eqref{aba=ababa}, \eqref{a2b2=ab2a}. Indeed,
$$a^{2}ba\approx^{\eqref{aba=ababa}} (a^{2})(baba)\approx^{\eqref{a2b2=ab2a}} (ababa)a\approx^{\eqref{aba=ababa}} aba^{2}.$$

Consider an identity $au\approx av$ which holds in the variety $\mathbf{W_{\lambda}}$. Note that Corollary \ref{B2IDC} implies that $a\in C(u)$ if and only if $a\in C(v)$. Consider possible cases:
\begin{arabiclist}
\item $a\notin C(u)$. Then $a\notin C(v)$ and $(h(u),0),(h(u),1)$ does not lie in the same connected component of the graph $Gr(u)$. Then by Lemma \ref{WIBSID} we obtain $h_{2}(u)=h_{2}(v)$ i.e. the identity $au\approx av$ has a form $abu'\approx abv'$. By Corollary \ref{B2IDC} we can observe that identity $bu'\approx bv'$ holds in the variety $\mathbf{B_{2}}$. Then the identity $bu'\approx bv'$ holds in the variety $\mathbf{L_{2}}\vee\mathbf{B_{2}}$ and follows from identities \eqref{a2=a3}, \eqref{aba=ababa}, $bc^{2}d^{2}\approx bd^{2}c^{2}$. In this case we find that the identity $abu'\approx abv'$ follows from identities \eqref{a2=a3}, \eqref{aba=ababa}, \eqref{abc2d2=abd2c2}.
    \item $a\in C(u)$. Then we have $a\in C(v)$ and the identity $au\approx av$ has a form $au'au''\approx av'av''$. Consider possible cases:
\begin{alphlist}
\item $(h(u),1),(h(u),1)$ lies in the same connected component of the graph $Gr(u)$. Applying the identity \eqref{aba=ababa} we obtain the identity $au'au'au''\approx av'av'av''$. Note that the identities $au'au''\approx au'a^{2}u''\approx av'av''\approx av'a^{2}v''$ holds in the variety $\mathbf{B_{2}}$ (by Lemma \ref{B2ID}) and follows from identities \eqref{a2=a3}, \eqref{aba=ababa}, $bc^{2}d^{2}\approx bd^{2}c^{2}$ (by Proposition \ref{B2L2FB}). Hence the identity $au'au'a^{2}u''\approx av'av'a^{2}v''$ follows from identities \eqref{a2=a3}, \eqref{aba=ababa}, \eqref{abc2d2=abd2c2}. Now applying the identity $a^{2}ba\approx aba^{2}$ we obtain the identity $a^{2}u'au'au''\approx a^{2}v'av'av''$. Applying the identity \eqref{aba=ababa} we obtain the identity $a^{2}u'au''\approx a^{2}v'av''$. As we noted the identity $au'au''\approx av'av''$ follows from identities \eqref{a2=a3}, \eqref{aba=ababa}, $bc^{2}d^{2}\approx bd^{2}c^{2}$. Hence the identity $au\approx av$ follows from identities \eqref{a2=a3}, \eqref{aba=ababa}, \eqref{abc2d2=abd2c2}, \eqref{a2b2=ab2a}.
      \item $(h(u),1),(h(u),1)$ does not lie in the same connected component of the graph $Gr(u)$. Then $h_{2}(u)=h_{2}(v)$ by Lemma \ref{WIBSID}. Thus the identity $au'au''\approx av'av''$ has a form $ab\overline{u}au''\approx ab\overline{v}au''$. Applying the identity \eqref{aba=ababa} we obtain the identity $ab\overline{u}ab\overline{u}au''\approx ab\overline{v}ab\overline{v}au''$. But the identity $b\overline{u}ab\overline{u}au''\approx b\overline{v}ab\overline{v}au''$ holds in the variety $\mathbf{B_{2}}$ by Lemma \ref{B2ID} and hence follows from identities \eqref{a2=a3}, \eqref{aba=ababa}, $bc^{2}d^{2}\approx bd^{2}c^{2}$. Hence the identity $ab\overline{u}ab\overline{u}au''\approx ab\overline{v}ab\overline{v}au''$ follows from identities \eqref{a2=a3}, \eqref{aba=ababa}, \eqref{abc2d2=abd2c2}.
\end{alphlist}
\end{arabiclist}
We obtained that any identity $u\approx v$ which holds in the variety $\mathbf{W_{\lambda}}$ follows from identities \eqref{a2=a3}, \eqref{aba=ababa}, \eqref{abc2d2=abd2c2}, \eqref{a2b2=ab2a}. Thus they are constitute a identity basis of the variety $\mathbf{W_{\lambda}}$.
\end{proof}

\section*{Acknowledgments}

The author would like to express his gratitude to M. V. Volkov for his
time and patience in responding to the author's questions, to S. I. Kublanovsky for suggesting to investigate indicator Burnside semigroups and to N. A. Vavilov and B. M. Vernikov for their valuable remarks and suggestions.

\appendix

\bibliographystyle{amsplain}

\end{document}